\newtheorem{theorem}{Theorem}[section]
\newtheorem{definition}[theorem]{Definition}
\newtheorem{remark}{Remark}
\newtheorem{lemma}[theorem]{Lemma}
\numberwithin{equation}{section}
\DeclareMathOperator{\supp}{supp}
\DeclareFontFamily{U}{mathx}{\hyphenchar\font45}
\DeclareFontShape{U}{mathx}{m}{n}{
      <5> <6> <7> <8> <9> <10>
      <10.95> <12> <14.4> <17.28> <20.74> <24.88>
      mathx10
      }{}
\DeclareSymbolFont{mathx}{U}{mathx}{m}{n}
\DeclareMathAccent{\widecheck}{0}{mathx}{"71}
\DeclareMathAccent{\wideparen}{0}{mathx}{"75}
\title[Length of sets under restricted projections onto lines]{Length of sets under restricted families of projections onto lines}
\author{Terence L.~J.~Harris}
\address{Department of Mathematics, Cornell University, Ithaca, NY 14853, USA}
\email{tlh236@cornell.edu}
\subjclass[2020]{28A78; 28A80}
\keywords{Hausdorff dimension, orthogonal projection}
\begin{document} 
\begin{abstract} Let $\gamma: I \to S^2$ be a $C^2$ curve with $\det(\gamma, \gamma', \gamma'')$ nonvanishing, and for each $\theta \in I$ let $\rho_{\theta}$ be orthogonal projection onto the span of $\gamma(\theta)$. It is shown that if $A \subseteq \mathbb{R}^3$ is a Borel set of Hausdorff dimension strictly greater than 1, then $\rho_{\theta}(A)$ has positive length for a.e.~$\theta \in I$. This answers a question raised by Käenmäki, Orponen and Venieri.   \end{abstract}
\maketitle
\section{Introduction} Let $S^2$ be the unit sphere in $\mathbb{R}^3$, let $\gamma: I \to S^2$ be a $C^2$ curve with $\det(\gamma, \gamma', \gamma'')$ nonvanishing on an interval $I$, and let $\rho_{\theta}$ be orthogonal projection onto the span of $\gamma(\theta)$, given by 
\[ \rho_{\theta}(x) = \langle x, \gamma(\theta) \rangle \gamma(\theta), \qquad x \in \mathbb{R}^3. \]
In \cite[Conjecture~1.6]{fasslerorponen14}, Fässler and Orponen conjectured that for any analytic set $A \subseteq \mathbb{R}^3$
\[ \dim \rho_{\theta} (A) = \min\{\dim A,1\}, \qquad  \text{a.e.~$ \theta \in I$,} \]
where $\dim B$ means the Hausdorff dimension of $B$. For general $C^2$ curves this was resolved by Pramanik, Yang and Zahl \cite{pramanik}, and independently by Gan, Guth and Maldague \cite{ggm}. The case where $\gamma$ is a non-great circle was resolved earlier by Käenmäki, Orponen and Venieri \cite{KOV}, who asked \cite[p.~4]{KOV} whether $\dim A > 1$ implies that $\rho_{\theta}(A)$ has positive length for a.e.~$\theta \in I$. This had been shown previously by Fässler and Orponen \cite[Theorem~1.9]{fasslerorponen14} in the special case where $A$ is a self-similar set without rotations. The following theorem resolves the general case. 
\begin{theorem} \label{maintheorem} If $A \subseteq \mathbb{R}^3$ is an analytic set with $\dim A > 1$, then
\[ \dim\left\{ \theta \in I : \mathcal{H}^1( \rho_{\theta}(A) ) = 0 \right\} \leq \frac{4- \dim A}{3}. \] \end{theorem}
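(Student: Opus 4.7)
The plan is to use a Fourier-analytic energy method. Fix $s \in (1, \dim A)$ and suppose toward contradiction that the exceptional set $E := \{\theta \in I : \mathcal H^1(\rho_\theta(A)) = 0\}$ has Hausdorff dimension exceeding $(4-s)/3$. By Frostman's lemma, one can then find compactly supported Borel probability measures $\mu$ on $A$ and $\nu$ on $E$ satisfying $\mu(B(x,r)) \lesssim r^s$ and $\nu(B(\theta,r)) \lesssim r^t$ for some $t > (4-s)/3$. Writing $\pi_\theta(x) = \langle x, \gamma(\theta)\rangle$, Plancherel identifies $\|(\pi_\theta)_*\mu\|_{L^2(\mathbb R)}^2$ with $(2\pi)^{-1}\int_\mathbb R |\widehat{\mu}(\xi\gamma(\theta))|^2 \, d\xi$. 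Therefore it suffices to establish the Fourier bound
\begin{equation}\label{eq:goal}
J := \int_I \int_\mathbb R |\widehat{\mu}(\xi \gamma(\theta))|^2 \, d\xi \, d\nu(\theta) < \infty,
\end{equation}
since \eqref{eq:goal} forces $(\pi_\theta)_* \mu \in L^2(\mathbb R)$ for $\nu$-a.e.~$\theta$, whence $\mathcal H^1(\rho_\theta(A))>0$ on a set of positive $\nu$-measure, contradicting $\supp \nu \subseteq E$.

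To attack \eqref{eq:goal}, I will decompose $\mathbb R$ into dyadic annuli $\{|\xi| \sim R\}$ and write $J = \sum_R J_R$. For each $R$, expanding the square and applying Fubini gives $J_R = \iint K_R(x-y)\, d\mu(x)\, d\mu(y)$ with oscillatory kernel
\[
K_R(z) := \int_I \int_{|\xi| \sim R} e^{-i \xi \langle z, \gamma(\theta) \rangle} \, d\xi \, d\nu(\theta).
\]
The $\xi$-integration produces a sharp bump of height $\sim R$ on the set $\{\theta : |\langle z, \gamma(\theta)\rangle| \lesssim 1/R\}$, so $|K_R(z)| \lesssim R \cdot \nu(\{\theta : |\langle z, \gamma(\theta)\rangle| \lesssim 1/R\})$. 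Invoking $\det(\gamma, \gamma', \gamma'') \ne 0$, the phase $\theta \mapsto \langle z, \gamma(\theta) \rangle$ has only simple zeros for $z$ away from the \emph{binormal cone} $\{s \gamma(\theta)\times\gamma'(\theta) : s\in\mathbb R,\,\theta\in I\}$, and at worst double zeros near it; moreover the binormal curve $\Gamma := \{\gamma(\theta) \times \gamma'(\theta) / |\gamma'(\theta)| : \theta \in I\} \subset S^2$ is itself a regular $C^1$ curve. Simple Taylor expansion then gives $|K_R(z)| \lesssim R^{1-t}|z|^{-t}$ away from the cone over $\Gamma$ and the worse bound $|K_R(z)| \lesssim R^{1-t/2}|z|^{-t/2}$ near it.

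Next, I will integrate these pointwise bounds against $\mu \otimes \mu$, decomposing simultaneously in $r = |x-y|$ and the angular distance $\delta$ of $(x-y)/|x-y|$ from $\Gamma$. The $s$-Frostman condition on $\mu$ and the non-degeneracy of $\Gamma$ should control the $\mu\otimes\mu$-mass in thin cone-shells around $\Gamma$, and combining this with the dichotomous kernel estimate should yield $J_R \lesssim R^{-\eta}$ for some $\eta = \eta(s,t) > 0$ precisely in the regime $3t + s > 4$; summing over $R$ then proves \eqref{eq:goal}.

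\textbf{Main obstacle.} The main technical difficulty is the binormal contribution, i.e., obtaining the sharp estimate for the $\mu \otimes \mu$-mass in thin cone-shells around $\Gamma$: this is what governs the exponent $(4-s)/3$, and the naive Frostman ball bound is not enough. This step requires incidence/Fourier-analytic input for non-degenerate curves on $S^2$ beyond volumetric covering arguments, and is where the methods from \cite{pramanik} and \cite{ggm} (resolving the Fässler--Orponen conjecture) are likely to be invoked.
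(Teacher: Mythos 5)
Your reduction to the Fourier energy bound $\int_I\int_{\mathbb R}|\widehat\mu(\xi\gamma(\theta))|^2\,d\xi\,d\nu(\theta)<\infty$ is asking for $\rho_{\theta\#}\mu\in L^2$ for $\nu$-a.e.\ $\theta$, which is strictly stronger than the absolute continuity the theorem needs, and I do not believe it is attainable for a general $s$-Frostman measure with $s$ close to $1$. Concretely, after the $\xi$-integration and Fubini you are led to estimating $R\int_I(\mu\otimes\mu)\bigl(\{(x,y):|\langle x-y,\gamma(\theta)\rangle|\lesssim R^{-1}\}\bigr)\,d\nu(\theta)$, i.e.\ the $\mu\otimes\mu$-mass of $R^{-1}$-slabs. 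For $1<s\le 2$ the Frostman condition gives \emph{nothing} beyond the trivial bound for a single slab (a Frostman measure of dimension $<2$ can put essentially all its mass on a plane), so the near-binormal/slab contribution to $J_R$ does not decay. You flag exactly this as the ``main obstacle,'' but it is not a gap that can be filled by citing \cite{pramanik} or \cite{ggm}: those papers prove the Hausdorff-dimension statement, not a positive-length or $L^2$/Frostman-type bound for the pushforward, and they supply no estimate of the form $\mu\otimes\mu(\text{cone-shell})\lesssim\delta^{c}r^s$ that would rescue the energy integral. Summing dyadically one finds the near-binormal piece contributes $\gtrsim R^{1-t}$, which diverges over $R$ precisely because $t\le 1$.

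The paper's argument is genuinely different and is designed to sidestep exactly this failure. Rather than trying to put the whole pushforward in $L^2$, it performs a good/bad decomposition of $\mu$ with respect to planks dual to the light-cone slabs (Definition~\ref{decompdefn}): the ``bad'' part $\mu_b$, supported on planks $T$ with $\mu(4T)\gtrsim 2^{j(\epsilon-1)}$, is controlled only in $L^1(\mathcal H^1)$ via Lemmas~\ref{IBP}--\ref{MTf} together with a Vitali covering and a bootstrap (Lemma~\ref{energylemma}, which improves its own exponent recursively); the ``good'' part $\mu_g$, which by construction has small mass on every plank, is controlled in $L^2(\mathcal H^1)$ via Plancherel and the refined decoupling theorem for cones (Theorem~\ref{refineddecouplingtheorem}). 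The plank density threshold $2^{j(\epsilon-1)}$ and the exponent $p=6$ in decoupling are what produce the $(4-s)/3$ exponent; these inputs have no analogue in the stationary-phase kernel estimate you propose. So your reduction and the kernel bounds away from the binormal are correct as far as they go, but the strategy as a whole cannot close, and the missing ingredient is not an incidence bound for $\mu$ near the binormal cone but a change of framework: $L^1$-plus-bootstrap for the concentrated part and decoupling for the spread-out part.
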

Throughout, the symbol $\mathcal{H}^s$ will be used for the $s$-dimensional Hausdorff measure on Euclidean space. By Frostman's lemma, Theorem~\ref{maintheorem} will follow from Theorem~\ref{projmeasure} below. For the statement, some notation will be defined first. Given a Borel measure $\mu$ on $\mathbb{R}^3$ and $\alpha \geq 0$, define 
\[ c_{\alpha}(\mu) = \sup_{\substack{ x \in \mathbb{R}^3 \\ r >0 } } \frac{ \mu(B(x,r))}{r^{\alpha} }. \]
For each $\theta \in I$, the pushforward measure $\rho_{\theta\#}\mu$ is defined by 
\[ (\rho_{\theta\#} \mu)(E) = \mu\left(\rho_{\theta}^{-1}(E) \right), \]
for any Borel set $E$.
\begin{theorem} \label{projmeasure} If $\mu$ is a Borel measure on $\mathbb{R}^3$ with $c_{\alpha}(\mu) < \infty$ for some $\alpha>1$, then
\[ \dim\left\{ \theta \in I : \rho_{\theta \#} \mu \not \ll \mathcal{H}^1 \right\} \leq \frac{4 - \alpha}{3}. \] \end{theorem}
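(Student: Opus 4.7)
The plan is to prove the stronger statement that $\rho_{\theta\#}\mu \in L^2(\mathbb{R})$ whenever $\theta$ lies outside a set of Hausdorff dimension at most $(4-\alpha)/3$; since $L^2(\mathbb{R}) \subseteq L^1_{\mathrm{loc}}(\mathbb{R})$, this implies $\rho_{\theta\#}\mu \ll \mathcal{H}^1$ outside such a set, which is Theorem \ref{projmeasure}. By Plancherel, $\|\rho_{\theta\#}\mu\|_{L^2(\mathbb{R})}^2 = \int_{\mathbb{R}} |\hat\mu(t\gamma(\theta))|^2\,dt$. Suppose, for contradiction, that the corresponding exceptional set has Hausdorff dimension strictly greater than $(4-\alpha)/3$; Frostman's lemma then supplies a nonzero finite Borel measure $\nu$ supported on that set with $c_s(\nu)<\infty$ for some $s > (4-\alpha)/3$. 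Since the above integral is $+\infty$ for $\nu$-a.e.\ $\theta$, a contradiction will follow from
\[ \int_I \int_{\mathbb{R}} |\hat\mu(t\gamma(\theta))|^2 \, dt\, d\nu(\theta) < \infty. \]

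Expanding $|\hat\mu|^2$ and swapping integrals (interpreted distributionally) rewrites this target as the energy integral $\iint K(x-y)\, d\mu(x)\, d\mu(y)$, where
\[ K(z) \;=\; \int_I \delta_0\!\bigl(\langle z, \gamma(\theta)\rangle\bigr)\, d\nu(\theta), \]
with $\delta_0$ the Dirac distribution at $0$. Equivalently, the target equals $\int |\hat\mu|^2\, d\eta$, where $\eta$ is the pushforward of $d\nu(\theta)\otimes dt$ under $(\theta, t) \mapsto t\gamma(\theta)$; $\eta$ is supported on the $2$-dimensional conic surface $\Gamma := \{t\gamma(\theta): t \in \mathbb{R},\, \theta \in I\}$, whose non-degeneracy is inherited from that of $\gamma$. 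From here one decomposes dyadically (in $|t|$ on the cone side, or equivalently in frequency $|\xi|\sim R$) and seeks a per-scale estimate of the form $\lesssim R^{-\delta}$ for some $\delta > 0$ depending only on $s$ and $\alpha$; summing a geometric series then yields finiteness.

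The per-scale estimate should combine three ingredients: (a) the Fourier consequence of $c_\alpha(\mu)<\infty$, namely the finite-energy bound $\int |\hat\mu|^2 |\xi|^{\beta-3}\,d\xi < \infty$ for every $\beta<\alpha$; (b) the Frostman-type bound $\nu(\{\theta: |\langle\omega,\gamma(\theta)\rangle| < \epsilon\}) \lesssim c_s(\nu)\, \epsilon^s / |\langle\omega,\gamma'(\theta_0)\rangle|^s$ at a simple zero $\theta_0$ of $\theta \mapsto \langle\omega,\gamma(\theta)\rangle$; and (c) the non-degeneracy $\det(\gamma,\gamma',\gamma'') \neq 0$, which bounds the order of the zeros of $\theta \mapsto \langle\omega,\gamma(\theta)\rangle$ by $2$, with the order-$2$ case confined to the $1$-dimensional binormal curve $\omega = \pm(\gamma\times\gamma')/|\gamma\times\gamma'|$ on $S^2$.

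I expect the main obstacle to be attaining the sharp threshold $\alpha + 3s > 4$. Near the binormal curve the order-$2$ zero degrades the Frostman estimate in (b) from $\epsilon^s$ to $\epsilon^{s/2}$, and this is precisely where the exponent $(4-\alpha)/3$ is tested. A plausible route is a broad/narrow decomposition of the pairs $(x,y)$ according to the distance of $(x-y)/|x-y|$ from the binormal curve: in the broad regime, (a) and (b) combined with the $\alpha$-energy of $\mu$ should suffice; in the narrow regime, one seemingly needs a refined cone restriction or $\ell^2$-decoupling estimate for the cone $\Gamma$, using the non-degeneracy in (c) to decouple into small caps and absorb the $\epsilon^{-s/2}$ loss from (b). Carefully balancing the broad and narrow contributions should produce the critical inequality $\alpha + 3s > 4$.
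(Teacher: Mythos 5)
Your plan aims to show that $\rho_{\theta\#}\mu \in L^2(\mathbb{R})$ for $\theta$ outside a set of dimension $\leq (4-\alpha)/3$, and to obtain that $L^2$ bound by estimating $\int|\widehat\mu|^2\,d\eta$ via decoupling for the cone. This is a genuinely different strategy from the paper's, and I do not believe it can be pushed through: $\rho_{\theta\#}\mu\in L^2$ is strictly stronger than $\rho_{\theta\#}\mu \ll \mathcal{H}^1$, and at the claimed threshold the $L^2$ statement is expected to \emph{fail} for the full measure $\mu$. Concretely, if $\mu$ places mass close to the Frostman-allowed maximum $\sim 2^{-j(\alpha-1)/2}$ on a few planks of dimensions $2^{-j}\times 2^{-j/2}\times 1$ dual to a common direction $\gamma(\theta_0)$, then $\rho_{\theta_0\#}\mu$ has spikes whose densities are large enough that $\sum_{T}\|M_T\mu\|_2^2$ grows polynomially in $2^j$ for $\alpha$ close to $1$, so the Plancherel sum you would need to control has no hope of being summable over scales. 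This is exactly why the paper splits $\mu = \mu_g + \mu_b$: a plank $T$ is declared ``bad'' when $\mu(4T)\geq 2^{j(\epsilon-1)}$, which for $\alpha$ near $1$ is a much smaller mass than the a priori Frostman bound permits. The $L^2$ estimate (via refined decoupling, as in your sketch) is proved only for the good part $\mu_g$, where the crucial gain in \eqref{muestimate} comes precisely from $\mu(4T) < 2^{j(\epsilon-1)}$. The bad part is handled entirely in $L^1$.

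The second idea you are missing is how the bad part is controlled. It is not handled by Fourier analysis at all, but by the observation that there are few bad planks (at most $\mu(\mathbb{R}^3)2^{j(1-\epsilon)}$ per scale), so their $\rho_\theta$-projections cover a small set. This is fed into Lemma~\ref{energylemma}, whose proof is a self-improving bootstrap: one assumes the lemma holds with exponent $\epsilon_0$, uses it to bound $\int\mu(B_j(\theta))\,d\lambda(\theta)$ for the unions of bad planks, and thereby proves the lemma for any $\epsilon > \tfrac{2}{3}\epsilon_0$. Iterating drives $\epsilon$ down to $0^+$, which is what ultimately yields the sharp exponent $(4-\alpha)/3$; your sketch has no counterpart to this recursion and no alternative mechanism for reaching the sharp threshold. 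In short: replace ``show $\rho_{\theta\#}\mu\in L^2$'' with ``decompose into good and bad, show $\rho_{\theta\#}\mu_g\in L^2$ via refined decoupling, and bound $\|\rho_{\theta\#}\mu_b\|_{L^1}$ via a bootstrapped Frostman-type estimate on unions of few intervals.'' Your observations in (a)--(c) about Frostman bounds near the binormal are qualitatively correct but are not, by themselves, enough to close the argument.
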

The notation $\rho_{\theta \#} \mu \not \ll \mathcal{H}^1$ means that $\rho_{\theta \#} \mu$ is not absolutely continuous with respect to $\mathcal{H}^1$. The proof of Theorem~\ref{projmeasure} is similar to the proof of Theorem~8 in \cite{GGGHMW}, which solved the analogous problem of projections onto planes. It uses a variant of the decomposition of a measure into ``good'' and ``bad'' parts, which originated in \cite{GIOW}. 

Given $s \geq 0$,  a set $A \subseteq \mathbb{R}^3$ is called an $s$-set if $A$ is $\mathcal{H}^s$-measurable with $0 < \mathcal{H}^s(A) < \infty$. Theorem~\ref{projmeasure} implies Theorem~\ref{maintheorem}, but it also implies the following slightly stronger version for $s$-sets.
\begin{theorem} \label{theoremgeneral} Suppose that $s>1$, and that $A \subseteq \mathbb{R}^3$ is an $s$-set. Let $\mu$ be the Borel measure defined by $\mu(F) = \mathcal{H}^s(F \cap A)$ for any Borel set $F \subseteq \mathbb{R}^3$, and let 
\[ E = \left\{ \theta \in I : \rho_{\theta \#} \mu \not\ll \mathcal{H}^1 \right\}. \]
 Then 
\[ \dim E \leq \frac{4-s}{3}, \]
and
\[ \mathcal{H}^1(\rho_{\theta}(B)) >0, \quad \text{ for all } \theta \in I \setminus E. \]
for any $\mathcal{H}^s$-measurable set $B \subseteq A$ with $\mathcal{H}^s(B)>0$.
 \end{theorem}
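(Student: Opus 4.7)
The plan is to reduce Theorem~\ref{theoremgeneral} directly to Theorem~\ref{projmeasure} via a density decomposition of the $s$-set $A$. Since $\mathcal{H}^s(A) < \infty$ and $s < 3$, the upper $s$-density of $\mu$ is finite at $\mathcal{H}^s$-a.e.~point of $A$, so for each $k \in \mathbb{N}$ the set
\[ A_k = \bigl\{ x \in \mathbb{R}^3 : \mathcal{H}^s(A \cap B(x,r)) \leq k r^s \text{ for every } r \in (0, 1/k) \bigr\} \]
is Borel (by lower semicontinuity of $x \mapsto \mathcal{H}^s(A \cap B(x,r))$ at fixed $r$), and $\mu\bigl(\mathbb{R}^3 \setminus \bigcup_k A_k\bigr) = 0$. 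A triangle-inequality argument, combined with $\mu(\mathbb{R}^3) = \mathcal{H}^s(A) < \infty$ to handle balls of radius $\geq 1/(2k)$, shows that the restriction $\mu_k$ of $\mu$ to $A_k$ satisfies $c_s(\mu_k) < \infty$.

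After disjointifying to $B_k = A_k \setminus \bigcup_{j<k} A_j$, the restriction $\nu_k$ of $\mu$ to $B_k$ still satisfies $c_s(\nu_k) < \infty$, and Theorem~\ref{projmeasure} applied with $\alpha = s > 1$ gives
\[ \dim E_k \leq \frac{4-s}{3}, \qquad E_k := \bigl\{ \theta \in I : \rho_{\theta\#}\nu_k \not\ll \mathcal{H}^1 \bigr\}. \]
Since $\mu = \sum_k \nu_k$ as finite Borel measures, and a countable sum of positive measures singular with respect to $\mathcal{H}^1$ remains singular, uniqueness of the Lebesgue decomposition forces $E = \bigcup_k E_k$. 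Countable stability of Hausdorff dimension then yields $\dim E \leq (4-s)/3$.

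For the pointwise statement, fix $\theta \in I \setminus E$ and an $\mathcal{H}^s$-measurable $B \subseteq A$ with $\mathcal{H}^s(B) > 0$. Using $\mathcal{H}^s(A) < \infty$, replace $B$ by a Borel subset of positive $\mathcal{H}^s$-measure. The measure $\sigma(F) := \mathcal{H}^s(F \cap B)$ is dominated by $\mu$, so $\rho_{\theta\#}\sigma \ll \rho_{\theta\#}\mu \ll \mathcal{H}^1$; but $\rho_{\theta\#}\sigma$ is concentrated on the analytic set $\rho_{\theta}(B)$ with total mass $\mathcal{H}^s(B) > 0$, so if $\mathcal{H}^1(\rho_{\theta}(B))$ were zero then absolute continuity would force $\rho_{\theta\#}\sigma(\rho_{\theta}(B)) = 0$, a contradiction. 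The only nontrivial ingredient is Theorem~\ref{projmeasure} itself; the rest is standard density-decomposition and Lebesgue-decomposition bookkeeping, so no genuine obstacle arises.
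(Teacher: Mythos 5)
Your proposal follows essentially the same route as the paper: a density-theorem decomposition of $A$ into countably many pieces with finite Frostman constant, application of Theorem~\ref{projmeasure} to each piece, countable stability of Hausdorff dimension, and a contradiction argument via absolute continuity for the pointwise statement. The only cosmetic differences are that you invoke uniqueness of the Lebesgue decomposition to get $E = \bigcup_k E_k$ (the paper argues this more directly from $\mu = \sum_n \mu_n$), and in the final step you use the $\mathcal{H}^1(F)=0$ form of absolute continuity together with Borel regularity while the paper uses the $\epsilon$--$\delta$ characterisation; both are equivalent and correct.
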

Theorem~\ref{theoremgeneral} is related to a lemma of Marstrand (see \cite[Lemma~13]{marstrand}), which states that for $s>1$, for any $s$-set $A$ in the plane, there is a measure zero set of exceptional directions, such that any $s$-set $B \subseteq A$ projects onto a set of positive Lebesgue measure outside of this set of exceptional directions.

Theorem~\ref{theoremgeneral} also implies Theorem~\ref{maintheorem}, since any analytic set of infinite $\mathcal{H}^s$ measure contains a closed set of positive finite $\mathcal{H}^s$ measure~\cite{davies}. 

\section{Proof of Theorem~\ref{projmeasure} and Theorem~\ref{theoremgeneral}}

Throughout this section, $\gamma: I \to S^2$ will be a fixed $C^2$ unit speed curve with $\det(\gamma, \gamma', \gamma'')$ nonvanishing on $I$, where $I$ is a compact interval. For $A \subseteq \mathbb{R}^3$, $m(A)$ will denote the Lebesgue measure of $A$.

\begin{definition} \label{decompdefn} Let
\[ \Lambda = \bigcup_{j \geq 1} \Lambda_j, \]
where each $\Lambda_j$ is a collection of boxes $\tau$ of dimensions $1 \times 2^{j/2} \times 2^j$, forming a finitely overlapping cover of the $\sim 1$-neighbourhood of the truncated light cone $\Gamma_j$ in the standard way, where 
\[ \Gamma_j = \left\{ t \gamma(\theta) : 2^{j-1} \leq \lvert t \rvert \leq 2^j, \quad \theta \in I \right\}. \] 
Each $\tau \in \Lambda_j$ has an angle $\theta_{\tau}$ such that the long axis of $\tau$ is parallel to $\gamma(\theta_{\tau} )$, the medium axis of $\tau$ is parallel to $\gamma'(\theta_{\tau})$, and the short axis of $\tau$ is parallel to $(\gamma \times \gamma')(\theta_{\tau})$. Let $\{\psi_{\tau}\}_{\tau \in \Lambda}$ be a smooth partition of unity subordinate to the cover $\left\{1.1\tau: \tau \in \Lambda\right\}$ of $\bigcup_{\tau \in \Lambda} \tau$, such that for any $\xi,v \in \mathbb{R}^3$, $t \in \mathbb{R}$ and any positive integer $n$,
\begin{multline} \label{derivconditiontau} \left\lvert \left( \frac{d}{dt} \right)^n \psi_{\tau}( \xi + t v ) \right\rvert \lesssim_n  \\
\left\lvert 2^{-j} \left\langle v, \gamma(\theta_{\tau}) \right\rangle \right\rvert^n + \left\lvert 2^{-j/2} \left\langle v, \gamma'(\theta_{\tau}) \right\rangle \right\rvert^n  + \left\lvert \left\langle v, (\gamma \times \gamma')(\theta_{\tau}) \right\rangle \right\rvert^n. \end{multline}

Fix $\widetilde{\delta}>0$. For each $\tau \in \Lambda$, let $\mathbb{T}_{\tau}$ be a collection of boxes (referred to as ``planks'') of dimensions $2^{j \widetilde{\delta}} \times 2^{j\left(\widetilde{\delta}-1/2\right)} \times 2^{j \left(\widetilde{\delta}-1\right)}$, forming a finitely overlapping cover of $\mathbb{R}^3$, such that for each $T \in \mathbb{T}_{\tau}$, the short axis of $T$ is parallel to $\gamma(\theta_{\tau})$, the medium axis is parallel to $\gamma'(\theta_{\tau})$, and the long axis is parallel to $(\gamma \times \gamma')(\theta_{\tau})$ (so that $T$ is ``dual'' to $\tau$). Let $\{\eta_T\}_{T \in \mathbb{T}_{\tau}}$ be a smooth partition of unity subordinate to the cover $\mathbb{T}_{\tau}$ of $\mathbb{R}^3$, such that for any $x,v \in \mathbb{R}^3$, any $t \in \mathbb{R}$ and any positive integer $n$, 
\begin{multline} \label{derivcondition} \left\lvert \left( \frac{d}{dt} \right)^n \eta_T( x + t v ) \right\rvert \lesssim_n  \\
\left\lvert 2^{j\left( 1- \widetilde{\delta} \right) } \left\langle v, \gamma(\theta_{\tau}) \right\rangle \right\rvert^n + \left\lvert 2^{j\left( 1/2- \widetilde{\delta} \right) } \left\langle v, \gamma'(\theta_{\tau}) \right\rangle \right\rvert^n  + \left\lvert 2^{-j \widetilde{\delta}} \left\langle v, (\gamma \times \gamma')(\theta_{\tau}) \right\rangle \right\rvert^n. \end{multline}

Given a finite complex Borel measure $\mu$ on $\mathbb{R}^3$, define
\[ M_T \mu = \eta_T \left( \mu \ast \widecheck{\psi_{\tau} } \right). \]  \end{definition}

For each $\theta \in I$, let $\pi_{\theta}$ be orthogonal projection onto the orthogonal complement of $(\gamma \times \gamma')(\theta)$. The following lemma is essentially a special case of Lemma~5 from \cite{GGGHMW}, but the proof will be included here for completeness. 

\begin{lemma} \label{IBP} There exists an $r>0$, depending only on $\gamma$, such that the following holds. Let $j \geq 1$ and let $\tau \in \Lambda_j$. If $\theta \in I$ is such that $2^{j\left( \widetilde{\delta} - 1/2 \right)} \leq \lvert \theta_{\tau} - \theta\rvert  \leq r$, then for any $T \in \mathbb{T}_{\tau}$, for any positive integer $N$ and for any $f \in L^1(\mathbb{R}^3)$,
\begin{equation} \label{l1inequality} \left\lVert \pi_{\theta \#} M_T f \right\rVert_{L^1(\mathcal{H}^2)} \leq C 2^{-j\widetilde{\delta} N } \left\lVert f \right\rVert_{L^1(\mathbb{R}^3) }, \end{equation}
where $C = C\left(N, \gamma, \widetilde{\delta}\right)$.  \end{lemma}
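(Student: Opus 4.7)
The plan is to apply Fourier inversion to $M_T f$ and then integrate by parts in the direction $v := (\gamma\times\gamma')(\theta)$. Writing
\[
\pi_{\theta\#} M_T f(z) = \int f(y)\, K(y,z)\, dy, \qquad K(y,z) := \int \psi_\tau(\xi)\, e^{2\pi i (z-y)\cdot \xi}\, \Phi_z(\xi)\, d\xi,
\]
with $\Phi_z(\xi) := \int \eta_T(z+tv)\, e^{2\pi i t\, v\cdot\xi}\, dt$, and noting that $\|\pi_{\theta\#} M_T f\|_{L^1(\mathcal{H}^2)} \leq \|f\|_{L^1(\mathbb{R}^3)}\,\sup_y \|K(y,\cdot)\|_{L^1(\mathcal{H}^2)}$, it suffices to show $\|K(y,\cdot)\|_{L^1(\mathcal{H}^2)} \leq C_N\, 2^{-j\widetilde{\delta} N}$ uniformly in $y$.

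The heart of the argument is the lower bound $|v\cdot\xi| \geq c\, 2^{j\widetilde{\delta}}\, R$ for every $\xi \in \supp(\psi_\tau)$, where
\[
R := 2^{j(1-\widetilde{\delta})}|\langle v, \gamma(\theta_\tau)\rangle| + 2^{j(1/2-\widetilde{\delta})}|\langle v, \gamma'(\theta_\tau)\rangle| + 2^{-j\widetilde{\delta}}|\langle v, (\gamma\times\gamma')(\theta_\tau)\rangle|
\]
is the scale appearing on the right-hand side of \eqref{derivcondition}. Expand $\xi = u\gamma(\theta_\tau) + b\gamma'(\theta_\tau) + c(\gamma\times\gamma')(\theta_\tau)$ with $u \sim 2^j$ (of fixed sign on $\tau$, since $\tau$ is a radial box in the annular neighborhood of $\Gamma_j$), $|b| \lesssim 2^{j/2}$ and $|c| \lesssim 1$. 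A Taylor expansion at $\theta_\tau$, using unit speed and the assumption $\det(\gamma, \gamma', \gamma'') \neq 0$, yields $|\langle v,\gamma(\theta_\tau)\rangle| \sim |\theta-\theta_\tau|^2$, $|\langle v,\gamma'(\theta_\tau)\rangle| \sim |\theta-\theta_\tau|$, and $\langle v,(\gamma\times\gamma')(\theta_\tau)\rangle = 1 + O(|\theta-\theta_\tau|^2)$, provided $r$ is small. Since $|\theta-\theta_\tau| \geq 2^{j(\widetilde{\delta}-1/2)}$, the leading term $u\langle v,\gamma(\theta_\tau)\rangle \sim 2^j|\theta-\theta_\tau|^2$ in $v\cdot\xi$ dominates the other two by factors $\gtrsim 2^{j\widetilde{\delta}}$, and one obtains $|v\cdot\xi| \sim 2^j|\theta-\theta_\tau|^2$. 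Since the same expansion gives $R \lesssim 2^{j(1-\widetilde{\delta})}|\theta-\theta_\tau|^2$, the ratio $|v\cdot\xi|/R \gtrsim 2^{j\widetilde{\delta}}$ follows.

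With this in hand, integration by parts $N$ times in $t$ in the definition of $\Phi_z(\xi)$, combined with the bound $|\partial_v^N\eta_T|\lesssim_N R^N$ from \eqref{derivcondition}, gives $|\Phi_z(\xi)| \leq C_N |v\cdot\xi|^{-N} \int |\partial_v^N\eta_T(z+tv)|\, dt$. Integrating in $z\in v^\perp$ via Fubini converts the $(z,t)$-integral into $\int_{\mathbb{R}^3}|\partial_v^N\eta_T|\,dx \lesssim |T|\,R^N$, hence
\[
\int_{v^\perp}|\Phi_z(\xi)|\, dz \leq C_N |T| (R / |v\cdot\xi|)^N \leq C_N |T|\, 2^{-j\widetilde{\delta} N}.
\]
Integrating against $\psi_\tau$ and using $\int\psi_\tau \lesssim |\tau|$ together with $|T||\tau| \lesssim 2^{3j\widetilde{\delta}}$ gives $\|K(y,\cdot)\|_{L^1(\mathcal{H}^2)} \leq C_N\, 2^{-j\widetilde{\delta}(N-3)}$ uniformly in $y$, and replacing $N$ by $N+3$ completes the proof. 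The main obstacle is the Taylor-expansion argument producing the sharp lower bound $|v\cdot\xi| \gtrsim 2^j|\theta-\theta_\tau|^2$ on $\supp(\psi_\tau)$; this is where the nonvanishing triple-product condition on $\gamma$ enters, together with the verification that $u$ has fixed sign and size $\sim 2^j$ on $\tau$.
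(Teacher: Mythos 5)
Your argument is correct and follows the same route as the paper: you pass to the kernel representation of $\pi_{\theta\#}M_T f$ (the paper writes out the same triple integral directly), integrate by parts $N$ times in the $(\gamma\times\gamma')(\theta)$ direction using the derivative bound \eqref{derivcondition}, establish the key lower bound $|\langle(\gamma\times\gamma')(\theta),\xi\rangle|\sim 2^j|\theta-\theta_\tau|^2$ on $\supp\psi_\tau$ via second-order Taylor expansion and the nonvanishing torsion, compare it to the scale $R\lesssim 2^{j(1-\widetilde{\delta})}|\theta-\theta_\tau|^2$, and finish with the volume bookkeeping $m(T)m(\tau)\lesssim 2^{3j\widetilde{\delta}}$. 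The only difference from the paper is the cosmetic reorganization into a kernel $K(y,z)$ and the order in which the Taylor estimate and the integration by parts are presented.
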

\begin{proof} For any $x \in (\gamma \times \gamma')(\theta)^{\perp}$, 
\begin{multline}\label{projdefn} (\pi_{\theta\#}M_Tf)(x) = \\
 \int_{\mathbb{R}^3} \int_{\tau}  f(y)  \psi_{\tau}(\xi) e^{-2\pi i \langle \xi, x-y\rangle} \left[ \int_{\mathbb{R}} \eta_T(x + t(\gamma \times \gamma')(\theta) ) e^{-2\pi it \langle \xi, (\gamma \times \gamma')(\theta) \rangle} \, dt \right] \, d\xi \, dy. \end{multline} 
Let $\xi \in \tau$. By integrating by parts $n$ times, and using \eqref{derivcondition},
\begin{multline} \label{firstline} \left\lvert \int_{\mathbb{R}} \eta_T(x + t(\gamma \times \gamma')(\theta) ) e^{-2\pi it \langle \xi, (\gamma \times \gamma')(\theta) \rangle} \, dt \right\rvert \lesssim  \int_{\mathbb{R}} \chi_T( x + t (\gamma \times \gamma')(\theta) ) \, dt \times \\
 \frac{2^{j \widetilde{\delta}}}{\left\lvert \langle (\gamma \times \gamma')(\theta), \xi \rangle \right\rvert^n} \bigg( 2^{j\left( 1- \widetilde{\delta} \right)}\left\lvert \left\langle (\gamma \times \gamma')(\theta), \gamma(\theta_{\tau} ) \right\rangle\right\rvert  + 2^{j\left( 1/2- \widetilde{\delta} \right)}\left\lvert \left\langle (\gamma \times \gamma')(\theta), \gamma'(\theta_{\tau} ) \right\rangle\right\rvert\\
 + 2^{-j \widetilde{\delta}}\left\lvert \left\langle (\gamma \times \gamma')(\theta), (\gamma \times \gamma')(\theta_{\tau} ) \right\rangle\right\rvert \bigg)^n. \end{multline}  
By the definition of $\tau$,
\begin{multline*} \langle (\gamma \times \gamma')(\theta), \xi \rangle  =\\
 \xi_1 \langle (\gamma \times \gamma')(\theta), \gamma(\theta_{\tau})  \rangle + \xi_2 \langle (\gamma \times \gamma')(\theta), \gamma'(\theta_{\tau})  \rangle + \xi_3 \langle (\gamma \times \gamma')(\theta), (\gamma \times \gamma')(\theta_{\tau})  \rangle, \end{multline*}
where $2^{j-2} \leq \lvert \xi_1 \rvert \leq 2^{j+2}$, $\lvert \xi_2 \rvert \leq 2^{j/2}$ and $\lvert \xi_3 \rvert \leq 1$. Let $\varepsilon := \lvert \theta_{\tau} - \theta\rvert$. By a second order Taylor approximation (using that $\gamma$ is $C^2$) and the scalar triple product formula,
\[ \left\lvert \xi_1 \langle (\gamma \times \gamma')(\theta), \gamma(\theta_{\tau})  \rangle \right\rvert \sim \varepsilon^2 2^j, \]
provided $r$ is small enough. Moreover
\[ \left\lvert \xi_2 \langle (\gamma \times \gamma')(\theta), \gamma'(\theta_{\tau})  \rangle\right\rvert \lesssim 2^{j/2} \varepsilon, \quad \left\lvert \xi_3 \langle (\gamma \times \gamma')(\theta), (\gamma \times \gamma')(\theta_{\tau}) \rangle \right\rvert \leq 1.  \]
Since $\varepsilon \geq 2^{j\left( \widetilde{\delta} - 1/2 \right)}$, it follows that 
\[ \left\lvert \langle (\gamma \times \gamma')(\theta), \xi \rangle \right\rvert \sim \varepsilon^2 2^j, \]
provided that $r$ is sufficiently small and $j$ is sufficiently large. Using second order Taylor approximation in a similar way gives that
\begin{multline*} 2^{j\left( 1- \widetilde{\delta} \right)}\left\lvert \left\langle (\gamma \times \gamma')(\theta), \gamma(\theta_{\tau} ) \right\rangle\right\rvert \\
 + 2^{j\left( 1/2- \widetilde{\delta} \right)}\left\lvert \left\langle (\gamma \times \gamma')(\theta), \gamma'(\theta_{\tau} ) \right\rangle\right\rvert + 2^{-j \widetilde{\delta}}\left\lvert \left\langle (\gamma \times \gamma')(\theta), (\gamma \times \gamma')(\theta_{\tau} ) \right\rangle\right\rvert \lesssim 2^{j\left(1-\widetilde{\delta} \right)}\varepsilon^2. \end{multline*} 
It follows that 
\[ \eqref{firstline} \lesssim_n 2^{-j \widetilde{\delta} (n-1) }  \int_{\mathbb{R}} \chi_T( x + t (\gamma \times \gamma')(\theta) ) \, dt. \] 
Substituting this into \eqref{projdefn} gives that 
\[ \left\lvert (\pi_{\theta\#}M_Tf)(x) \right\rvert \lesssim_n 2^{-j \widetilde{\delta} (n-1) } \left\lVert f\right\rVert_{L^1(\mathbb{R}^3)} m(\tau)  \int_{\mathbb{R}} \chi_T( x + t (\gamma \times \gamma')(\theta) ) \, dt, \]
for any $x \in (\gamma \times \gamma')(\theta)^{\perp}$. Integrating over $t \in \mathbb{R}$ and $x \in (\gamma \times \gamma')(\theta)^{\perp}$ gives \eqref{l1inequality}. \end{proof} 

The following lemma is essentially the same as Lemma~4 from \cite{GGGHMW}, but again the proof is included for completeness.
\begin{lemma} \label{MTf} Let $j \geq 1$ and let $\tau \in \Lambda_j$. For any finite compactly supported Borel measure $\mu$, 
\[ \left\lVert M_T \mu \right\rVert_{L^1(\mathbb{R}^3) } \leq 2^{3j \widetilde{\delta} } \mu(2T) + C_N 2^{-j \widetilde{\delta} N} \mu(\mathbb{R}^3), \]
for any positive integer $N$. \end{lemma}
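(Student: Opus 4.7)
The plan is to reduce the $L^1$ estimate to a pointwise bound on $\mu \ast \widecheck{\psi_\tau}$ by exploiting that $\eta_T$ (being a partition-of-unity bump) is supported in some fixed dilate $CT$ of $T$. This gives
\[
\lVert M_T \mu \rVert_{L^1(\mathbb{R}^3)} \leq m(CT)\,\lVert \mu \ast \widecheck{\psi_\tau} \rVert_{L^\infty(CT)},
\]
and since $m(CT) \sim m(T) = 2^{3j(\tilde{\delta} - 1/2)}$, it will suffice to show, for each $x \in CT$,
\[
|(\mu \ast \widecheck{\psi_\tau})(x)| \lesssim 2^{3j/2} \mu(2T) + C_N\,2^{3j/2 - jN\tilde{\delta}} \mu(\mathbb{R}^3);
\]
multiplying through by $m(T)$ then produces the stated inequality (with $N$ replaced by $N-3$, which is harmless since $N$ is arbitrary).

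The main technical ingredient is the standard anisotropic decay bound
\[
|\widecheck{\psi_\tau}(z)| \lesssim_N 2^{3j/2}\,W_{\tau^*}(z),
\]
where $W_{\tau^*}(z) = (1 + 2^{j}|\langle z, \gamma(\theta_\tau)\rangle|)^{-N}(1 + 2^{j/2}|\langle z, \gamma'(\theta_\tau)\rangle|)^{-N}(1 + |\langle z, (\gamma \times \gamma')(\theta_\tau)\rangle|)^{-N}$ is a Schwartz weight adapted to the dual box $\tau^*$ of $\tau$, of dimensions $2^{-j}\times 2^{-j/2}\times 1$ in the directions $\gamma(\theta_\tau), \gamma'(\theta_\tau), (\gamma\times\gamma')(\theta_\tau)$. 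This will follow from the derivative bounds \eqref{derivconditiontau} by integrating by parts $N$ times in each axis direction, and interpolating with the trivial bound $\lVert \widecheck{\psi_\tau}\rVert_\infty \leq \lVert \psi_\tau \rVert_{L^1} \lesssim m(\tau) = 2^{3j/2}$. The crucial geometric observation is that the plank $T$ is an anisotropic dilate of $\tau^*$ by the uniform factor $2^{j\tilde{\delta}}$ along all three axes: one checks $2^{j(\tilde{\delta}-1)}/2^{-j} = 2^{j(\tilde{\delta}-1/2)}/2^{-j/2} = 2^{j\tilde{\delta}}/1 = 2^{j\tilde{\delta}}$. Consequently, for $x \in CT$ and $y \notin 2T$ there is some axis direction in which $|\langle x - y, \cdot \rangle|$ exceeds the corresponding $\tau^*$-scale by a factor $\gtrsim 2^{j\tilde{\delta}}$, so $W_{\tau^*}(x-y) \lesssim 2^{-jN\tilde{\delta}}$.

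With these two ingredients, I finish by decomposing $\mu = \mu|_{2T} + \mu|_{(2T)^c}$ and bounding each piece of $(\mu \ast \widecheck{\psi_\tau})(x)$ pointwise on $CT$: the first contribution by $\lVert \widecheck{\psi_\tau}\rVert_\infty\, \mu(2T) \lesssim 2^{3j/2} \mu(2T)$, and the second by $2^{3j/2 - jN\tilde{\delta}} \mu(\mathbb{R}^3)$ via the decay above. Assembling these and multiplying by $m(T)$ as in the first paragraph yields the claimed bound. The only non-routine step is the verification of the weight estimate on $\widecheck{\psi_\tau}$, which is standard anisotropic integration by parts and is essentially the only place \eqref{derivconditiontau} is used; everything else is arithmetic, and I anticipate no substantive obstacle beyond this technical estimate.
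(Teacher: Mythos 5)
Your proof is correct and takes essentially the same approach as the paper's: both split at $2T$, bound the near contribution by $\lVert\widecheck{\psi_\tau}\rVert_\infty\, m(T) \lesssim 2^{3j\widetilde{\delta}}$, and exploit the uniform $2^{j\widetilde{\delta}}$ dilation ratio between $T$ and the dual box $\tau^*$ (via \eqref{derivconditiontau} and integration by parts) to get superpolynomial decay of $\widecheck{\psi_\tau}$ outside $T_0$. The only organizational difference is that you take a pointwise $L^\infty$ bound on $\mu \ast \widecheck{\psi_\tau}$ over $\supp\eta_T$ and then multiply by $m(T)$, whereas the paper applies Fubini and dyadically decomposes $\int_{\mathbb{R}^3\setminus T_0}\lvert\widecheck{\psi_\tau}\rvert$; both lose a harmless $2^{3j\widetilde{\delta}}$ factor in the tail, absorbed by relabelling $N$.
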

\begin{proof} By definition,
\begin{align}\notag \left\lVert M_T \mu \right\rVert_{L^1(\mathbb{R}^3) } &= \int \eta_T(x) \left\lvert \int \widecheck{\psi_{\tau}}(x-y) \, d\mu(y) \right\rvert \, dx \\
\notag &\leq \int_{T} \int_{2T} \left\lvert \widecheck{\psi_{\tau}}(x-y) \right\rvert \, d\mu(y) \, dx \\
\label{starcircle} &\quad + \int_T \int_{\mathbb{R}^3 \setminus 2T} \left\lvert \widecheck{\psi_{\tau}}(x-y) \right\rvert \, d\mu(y) \, dx. \end{align}
The first integral satisfies 
\begin{equation} \label{star1} \int_{T} \int_{2T} \left\lvert \widecheck{\psi_{\tau}}(x-y) \right\rvert \, d\mu(y) \, dx  \leq 2^{3j \widetilde{\delta} } \mu(2T). \end{equation}
The second integral satisfies
\[ \int_T \int_{\mathbb{R}^3 \setminus 2T} \left\lvert \widecheck{\psi_{\tau}}(x-y) \right\rvert \, d\mu(y) \, dx \leq \mu(\mathbb{R}^3) \int_{\mathbb{R}^3 \setminus T_0} \left\lvert \widecheck{\psi_{\tau}} \right\rvert, \]  
where $T_0$ is the translate of the plank $T$ to the origin, parallel to $T$. Integrating by parts, and using \eqref{derivconditiontau}, gives that for any $k \geq 0$, for any $x \in \mathbb{R}^3 \setminus 2^k T_0$, 
\[ \left\lvert \widecheck{\psi_{\tau}}(x) \right\rvert \lesssim_N 2^{-kN-j \widetilde{\delta} N} m(\tau). \]
Summing a geometric series over $k \geq 0$ gives (with relabelled $N$)
\begin{equation} \label{star2} \int_T \int_{\mathbb{R}^3 \setminus 2T} \left\lvert \widecheck{\psi_{\tau}}(x-y) \right\rvert \, d\mu(y) \, dx \leq C_N\mu(\mathbb{R}^3)2^{-j \widetilde{\delta} N}.  \end{equation}
Putting \eqref{star1} and \eqref{star2} into \eqref{starcircle} finishes the proof. \end{proof}

For a function $f: X \to [0, +\infty]$ on a measure space $(X, \mathcal{A}, \mu)$, let $\int_* f \, d\mu$ denote the lower integral of $f$, defined by 
\[ \int_* f \, d\mu = \sup\left\{  \int g \, d\mu : \text{$g$ is simple and $\mathcal{A}$-measurable with $0 \leq g \leq f$} \right\}, \]
where ``simple'' means that $g$ takes finitely many values. In the application of Lemma~\ref{energylemma} below, the integrand will be measurable, so the use of the lower integral is not important and is only a technical convenience to avoid measurability issues. The definition of the lower integral is standard; see e.g.~\cite[p.~13]{mattila}. 

\begin{lemma}  \label{energylemma} Let $\beta \in [0,1]$, let $\alpha = 4-3\beta$, and let $\lambda$ be a Borel measure supported on $I$ with $c_{\beta}(\lambda) \leq 1$. Then for any $\epsilon >0$, there exists $\delta>0$ such that 
\begin{equation} \label{geometric} \int_{*}  (\rho_{\theta \#}\mu)\left( \bigcup_{D \in \mathbb{D}_{\theta}} D \right) \, d\lambda(\theta)  \leq C(\lambda, \delta, \epsilon, \gamma) R^{-\delta} \mu(\mathbb{R}^3), \end{equation}
for any $R \geq 1$, for any Borel measure $\mu$ on $B_3(0,1)$ with $c_{\alpha}(\mu) \leq 1$, and for any family of sets $\{ \mathbb{D}_{\theta} \}$, where each $\mathbb{D}_{\theta}$ is a disjoint set of intervals of diameter $2R^{-1}$ in the span of $\gamma(\theta)$, each with cardinality $\lvert\mathbb{D}_{\theta}\rvert \leq R^{1-\epsilon} \mu(\mathbb{R}^3)$. \end{lemma}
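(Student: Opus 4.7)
The plan is to follow the wave-packet argument behind Theorem~8 of \cite{GGGHMW}, adapted from the two-dimensional projections $\pi_\theta$ to the one-dimensional projections $\rho_\theta$ of interest here. By a standard mollification one may reduce to the case that $\mu$ is a smooth compactly supported density with $c_\alpha(\mu) \leq 2$. Fix $j$ with $2^j \sim R$ and write, using Definition~\ref{decompdefn},
\[ \mu = \sum_{\tau \in \Lambda_j} \sum_{T \in \mathbb{T}_\tau} M_T \mu + \mu_{\mathrm{rem}}, \]
where $\mu_{\mathrm{rem}}$ collects low-frequency and off-cone contributions; these are controlled by routine Fourier decay together with the $c_\alpha$-hypothesis.

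For each pair $(\tau,T)$, split the $\theta$-integration according to whether $\theta$ is far from $\theta_\tau$ (i.e.\ $|\theta-\theta_\tau|\geq R^{\widetilde{\delta}-1/2}$) or near. In the far regime, factoring $\rho_\theta$ through $\pi_\theta$ together with Lemma~\ref{IBP} gives
\[ \|\rho_{\theta\#}M_T\mu\|_{L^1(\mathcal{H}^1)} \lesssim_N R^{-\widetilde{\delta}N} \|M_T\mu\|_{L^1(\mathbb{R}^3)}, \]
and combined with Lemma~\ref{MTf} and the finite-overlap bound $\sum_T \mu(2T) \lesssim \mu(\mathbb{R}^3)$, the far part contributes a negligible amount for $N$ large.

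For the near regime, a second-order Taylor expansion at $\theta_\tau$ shows that $\rho_\theta(T)$ is contained in an interval of length $\lesssim R^{3\widetilde{\delta}-1}$: if $(x_1,x_2,x_3)$ are the coordinates of $x\in T$ in the moving frame $(\gamma,\gamma',\gamma\times\gamma')(\theta_\tau)$, then
\[ \langle x,\gamma(\theta)\rangle = x_1 + (\theta-\theta_\tau)x_2 + O\bigl((\theta-\theta_\tau)^2(|x_1|+|x_3|)\bigr), \]
where the coefficient of $x_3$ in the quadratic term involves $\det(\gamma,\gamma',\gamma'')(\theta_\tau)$. Inserting $|x_1|\lesssim R^{\widetilde{\delta}-1}$, $|x_2|\lesssim R^{\widetilde{\delta}-1/2}$, $|x_3|\lesssim R^{\widetilde{\delta}}$ and $|\theta-\theta_\tau|\leq R^{\widetilde{\delta}-1/2}$ yields the claim, so that at most $\min(R^{3\widetilde{\delta}},|\mathbb{D}_\theta|)$ intervals in $\mathbb{D}_\theta$ meet $\rho_\theta(T)$.

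The crucial step is then to combine four ingredients: the Frostman bound $\mu(2T)\leq R^{\widetilde{\delta}\alpha}$ per plank; the overlap bound $\sum_T \mu(2T)\lesssim\mu(\mathbb{R}^3)$; the cardinality hypothesis $|\mathbb{D}_\theta|\leq R^{1-\epsilon}\mu(\mathbb{R}^3)$, which restricts the number of contributing planks through the fact that $\rho_\theta^{-1}(E_\theta)$ is a union of $R^{-1}$-thick slabs, and supplies a saving of $R^{-\epsilon}$; and the $\lambda$-mass bound $\lambda\{|\theta-\theta_\tau|\leq R^{\widetilde{\delta}-1/2}\}\leq R^{\beta(\widetilde{\delta}-1/2)}$ from $c_\beta(\lambda)\leq 1$. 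After summing over $T\in\mathbb{T}_\tau$ and $\tau\in\Lambda_j$ (of size $\sim R^{3/2}$) and integrating over near $\theta$, the exponents combine through the hypothesis $\alpha=4-3\beta$ to yield, modulo $R^{O(\widetilde{\delta})}$ losses from the wave-packet decomposition,
\[ \int_* (\rho_{\theta\#}\mu)(E_\theta)\,d\lambda(\theta) \lesssim R^{-\epsilon+O(\widetilde{\delta})}\mu(\mathbb{R}^3). \]
Taking $\widetilde{\delta}$ small enough relative to $\epsilon$ gives the desired decay $R^{-\delta}\mu(\mathbb{R}^3)$ for some $\delta=\delta(\epsilon)>0$.

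The main obstacle will be the bookkeeping in the near case: one must ensure that the combinatorial losses (the $R^{3/2}$ count of $\tau$'s and the $R^{3\widetilde{\delta}}$ count of intervals per plank) are absorbed by the two Frostman conditions and the cardinality hypothesis, and it is precisely the critical relation $\alpha+3\beta=4$ that makes the final exponent vanish. A secondary technical nuisance is the handling of the low-frequency and off-cone remainder $\mu_{\mathrm{rem}}$, which requires showing via standard decay estimates that it contributes at most $R^{-\delta}\mu(\mathbb{R}^3)$ on its own.
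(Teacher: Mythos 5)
Your high-level framing is right — the proof is a wave-packet argument adapted from~\cite{GGGHMW} — but you have omitted the two structural ideas that make the paper's proof close, and without them your ``crucial step'' does not actually close.

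First, the paper's proof is a \emph{bootstrap} on $\epsilon$: it sets $\epsilon_0$ to be (slightly above) the infimum of all $\epsilon$ for which the lemma holds, proves the lemma for every $\epsilon > 2\epsilon_0/3$, and concludes that the infimum must be $0$. This induction is used in an essential way to control the \emph{bad} part: bad planks are declared to be those with $\mu(4T) \geq 2^{j(\epsilon_0-1)}$, their union over $T$ and nearby $\tau$ produces a family of $\lesssim 2^{j(1-\epsilon_0)}\mu(\mathbb{R}^3)$ intervals after projecting by $\rho_\theta$, and the inductive hypothesis (the lemma at $\epsilon_0$) is then invoked to show this bad contribution decays. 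Your proposal contains no induction and no mass-threshold split into good/bad planks, so you have no mechanism to replace this.

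Second, and more importantly, your proposal never passes to $L^2$ and never uses decoupling. The paper's key step is the Cauchy--Schwarz split
\[
\int_* (\rho_{\theta\#}\mu)(E_\theta)\,d\lambda(\theta)\ \leq\ \int\|\rho_{\theta\#}\mu_b\|_{L^1}\,d\lambda\ +\ \sup_\theta\mathcal{H}^1(E_\theta)^{1/2}\Big(\int\|\rho_{\theta\#}\mu_g\|_{L^2}^2\,d\lambda\Big)^{1/2},
\]
and the $L^2$ norm of the good part is then controlled by Plancherel, the uncertainty principle, H\"older (with $p=6$), and the \emph{refined decoupling inequality} (Theorem~\ref{refineddecouplingtheorem}). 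It is precisely this step that exploits the curvature of $\gamma$ and produces the exponent involving $\alpha = 4-3\beta$. Your proposal replaces this entirely by a counting/Frostman/overlap bookkeeping, which one can check does not give the required decay over the full range $\beta\in[0,1]$: with $2^j\sim R$, a plank's intersection with $B(0,1)$ has dimensions $\sim R^{\widetilde\delta-1}\times R^{\widetilde\delta-1/2}\times 1$, so the correct Frostman bound is $\mu(2T)\lesssim \min(R^{3/2-\alpha},1)$ up to $R^{O(\widetilde\delta)}$ (your $R^{\widetilde\delta\alpha}$ is not right), and $|\Lambda_j|\sim R^{1/2}$ (not $R^{3/2}$). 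Running the counting with these corrected exponents — the cardinality hypothesis gives $\lesssim R^{3/2-\epsilon}\mu(\mathbb{R}^3)$ relevant planks per $\tau$, the $\lambda$-mass of $\{|\theta-\theta_\tau|\lesssim R^{-1/2}\}$ is $\lesssim R^{-\beta/2}$, and there are $\sim R^{1/2}$ values of $\tau$ — one gets, with $\alpha=4-3\beta$, a bound of order $R^{(1-\beta)/2}\min(R^{3\beta-1-\epsilon},1)\mu(\mathbb{R}^3)$; this only decays in $R$ for $\beta$ roughly below $1/5$, nowhere near the whole interval $[0,1]$. So the assertion that ``the exponents combine through the hypothesis $\alpha=4-3\beta$'' is not substantiated and, as written, is false. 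To repair the argument you would need to reinstate the bootstrap and the $L^1/L^2$ Cauchy--Schwarz with refined decoupling.
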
 
\begin{remark}  Lemma~\ref{energylemma} roughly says that the pushforward $\rho_{\theta \#} \mu$ of a $(4-3\beta)$-dimensional measure $\mu$ satisfies a 1-dimensional Frostman condition on average. By taking $\beta =1$ (for example), this is enough to conclude that projections of 1-dimensional sets are a.e.~1-dimensional, but the main application of Lemma~\ref{energylemma} here will be to bound the $L^1$ norm of the ``bad'' part of the measure in the proof of Theorem~\ref{projmeasure}. The ``bad'' part of the measure corresponds to intervals of large $\rho_{\theta\#}\mu$-mass, which will automatically satisfy the cardinality assumption of Lemma~\ref{energylemma}. \end{remark}

\begin{proof}[Proof of Lemma~\ref{energylemma}] It may be assumed that $\alpha \leq 3$, since otherwise the lemma is trivial. Since the constant is allowed to depend on $\gamma$, it can be assumed that $\gamma$ is localised to a smaller interval on which Lemma~\ref{IBP} holds. 

Let $\phi_R$ be a non-negative bump function supported in $B(0, R^{-1})$ which integrates to 1, defined by 
\[ \phi_R(x) = R^3 \phi(Rx), \]
for some fixed non-negative bump function $\phi$ with support in $B(0,1)$ such that $\int \phi = 1$. For any $\theta$ and any $D \in \mathbb{D}_{\theta}$, the 1-Lipschitz property of orthogonal projections implies that
\[ \left[ \rho_{\theta\#}(\mu \ast \phi_R) \right](2D) \geq (\rho_{\theta\#}\mu)(D), \]
where $2D$ is the interval with the same centre as $D$, but twice the radius. Moreover, 
\[ c_{\alpha}( \mu \ast \phi_R) \lesssim c_{\alpha}(\mu), \] 
so it suffices to prove \eqref{geometric} with $\mu \ast \phi_R$ in place of $\mu$. To simplify notation the new measure will not be relabelled, but it will be assumed throughout that $\mu$ is a non-negative Schwartz function, and that
\begin{equation} \label{schwartzdecay} \left\lvert \widehat{\mu}(\xi) \right\vert \leq C_N (R/ \xi)^N, \quad \xi \in \mathbb{R}^3, \end{equation}
for any positive integer $N$, where $C_N$ is a constant depending only on $N$. 

Let $\epsilon_0$ be any positive real number which is strictly larger than the the infimum over all positive $\epsilon$ for which the conclusion of the lemma is true. It suffices to prove that the lemma holds for any $\epsilon > (2\epsilon_0)/3$, so let such an $\epsilon$ be given. Let $R \geq 1$ and choose a non-negative integer $J$ such that $2^J \sim R^{\epsilon/1000}$. Let $\varepsilon >0$ be such that $\varepsilon \ll \epsilon - \frac{2\epsilon_0}{3}$. Choose $\widetilde{\delta}>0$ such that $\widetilde{\delta} \ll \min\{\delta_{\epsilon_0}, \varepsilon\}$, where $\delta_{\epsilon_0}$ is a $\delta$ corresponding to $\epsilon_0$ that satisfies \eqref{geometric}. 

Define the ``bad'' part of $\mu$ by
\begin{equation} \label{baddefn} \mu_b = \sum_{j \geq J} \sum_{\tau \in \Lambda_j} \sum_{T \in \mathbb{T}_{\tau,b} } M_T \mu, \end{equation}
where, for each $\tau \in \Lambda_j$, the set of ``bad'' planks corresponding to $\tau$ is defined by 
\begin{equation} \label{badplankdefn} \mathbb{T}_{\tau, b} = \left\{ T \in \mathbb{T}_{\tau} : \mu(4T) \geq 2^{j(\epsilon_0 -1)} \right\}, \end{equation}
where $4T$ is a plank with the same centre as $T$, but scaled by a factor of 4. Define the ``good'' part of $\mu$ by 
\[ \mu_g = \mu-\mu_b. \] 
The Schwartz decay of $\mu$ implies that the sum in \eqref{baddefn} converges in the Schwartz space $\mathcal{S}(\mathbb{R}^3)$. This implies that $\mu_b$ and $\mu_g$ are Schwartz functions, and in particular they are finite complex measures. Pushforwards of complex measures are defined just as for positive measures. By\footnote{The particular Cauchy-Schwarz technique used here is from \cite{liu}.} Cauchy-Schwarz,
\begin{multline*} \int_{*} \left( \rho_{\theta \#} \mu \right)\left( \bigcup_{D \in \mathbb{D}_{\theta}} D \right) \, d\lambda(\theta)  \leq \int  \left\lVert \rho_{\theta \#} \mu_b \right\rVert_{L^1(\mathcal{H}^1) } \, d\lambda(\theta) \\ + \sup_{\theta \in I} \mathcal{H}^1\left(  \bigcup_{D \in \mathbb{D}_{\theta}} D \right)^{1/2} \left(\int  \left\lVert \rho_{\theta \#} \mu_g \right\rVert_{L^2(\mathcal{H}^1) }^2 \, d\lambda(\theta) \right)^{1/2}. \end{multline*}
The contribution from the ``bad'' part will be bounded first. By the triangle inequality,
\begin{align} \notag \int  \left\lVert \rho_{\theta \#} \mu_b \right\rVert_{L^1(\mathcal{H}^1) } \, d\lambda(\theta)  &\leq \sum_{j \geq J}   \int \sum_{\tau \in \Lambda_j} \sum_{T \in \mathbb{T}_{\tau,b} }\left\lVert \rho_{\theta \#} M_T \mu \right\rVert_{L^1(\mathcal{H}^1) } \, d\lambda(\theta) \\
\label{mainpart} &= \sum_{j \geq J}  \int  \sum_{\substack{\tau \in \Lambda_j: \\ \left\lvert \theta_{\tau} - \theta \right\rvert \leq 2^{j\left(\widetilde{\delta}-1/2\right)}}}  \sum_{T \in \mathbb{T}_{\tau,b} }\left\lVert \rho_{\theta \#} M_T \mu \right\rVert_{L^1(\mathcal{H}^1) } \, d\lambda(\theta)\\
\label{negligible} &\quad + \sum_{j \geq J}   \int  \sum_{\substack{\tau \in \Lambda_j: \\ \left\lvert \theta_{\tau} - \theta \right\rvert > 2^{j\left(\widetilde{\delta}-1/2\right)}}} \sum_{T \in \mathbb{T}_{\tau,b} }\left\lVert \rho_{\theta \#} M_T \mu \right\rVert_{L^1(\mathcal{H}^1) } \, d\lambda(\theta). \end{align}
Let $\pi_{\theta}$ be orthogonal projection onto $(\gamma \times \gamma')(\theta)^{\perp}$. By the inequality
\[ \left\lVert \rho_{\theta \#} f \right\rVert_{L^1(\mathcal{H}^1)} \leq \left\lVert \pi_{\theta \#} f \right\rVert_{L^1(\mathcal{H}^2)}, \]
followed by Lemma~\ref{IBP}, 
\[ \eqref{negligible}  \lesssim 2^{-J}\mu(\mathbb{R}^3) \sim R^{-\epsilon/1000}\mu(\mathbb{R}^3).\] 
By the inequality
\[ \left\lVert \rho_{\theta \#} f \right\rVert_{L^1(\mathcal{H}^1)} \leq \left\lVert f \right\rVert_{L^1(\mathbb{R}^3)}, \]
followed by Lemma~\ref{MTf},
\begin{align*} \eqref{mainpart} &\leq  \sum_{j \geq J}  \int \sum_{\substack{\tau \in \Lambda_j: \\ \left\lvert \theta_{\tau} - \theta \right\rvert \leq 2^{j\left(\widetilde{\delta}-1/2\right)}}}  \sum_{T \in \mathbb{T}_{\tau,b} }\left\lVert  M_T \mu \right\rVert_{L^1(\mathbb{R}^3) } \, d\lambda(\theta) \\
&\lesssim 2^{-J}\mu(\mathbb{R}^3) + \sum_{j \geq J} 2^{3j \widetilde{\delta}} \int  \sum_{\substack{\tau \in \Lambda_j: \\ \left\lvert \theta_{\tau} - \theta \right\rvert \leq 2^{j\left(\widetilde{\delta}-1/2\right)}}}  \sum_{T \in \mathbb{T}_{\tau,b} }\mu(2T) \, d\lambda(\theta), \end{align*}
The non-tail term satisfies 
\begin{equation} \label{nontail} \sum_{j \geq J} 2^{3j \widetilde{\delta}} \int  \sum_{\substack{\tau \in \Lambda_j: \\ \left\lvert \theta_{\tau} - \theta \right\rvert \leq 2^{j\left(\widetilde{\delta}-1/2\right)}}}  \sum_{T \in \mathbb{T}_{\tau,b} }\mu(2T) \, d\lambda(\theta) \lesssim  \sum_{j \geq J} 2^{10j \widetilde{\delta}}\int \mu(B_j(\theta) ) \, d\lambda(\theta),  \end{equation}
where, for each $\theta \in I$ and each $j$, 
\[ B_j(\theta) =  \bigcup_{\substack{\tau \in \Lambda_j: \\ \left\lvert \theta_{\tau} - \theta \right\rvert \leq 2^{j\left(\widetilde{\delta}-1/2\right)}}}  \bigcup_{T \in \mathbb{T}_{\tau,b} } 2T. \]
The inequality \eqref{nontail} used that for each $j$ and each $\theta \in I$, there are $\lesssim 2^{j \widetilde{\delta}}$ sets $\tau \in \Lambda_j$ with the property that $\left\lvert \theta_{\tau} - \theta \right\rvert \leq 2^{j\left(\widetilde{\delta}-1/2\right)}$, which means each of the planks $2T$ in the union defining $B_j(\theta)$ intersects $\lesssim 2^{j \widetilde{\delta}}$ of the others. For each $T$ in the union defining $B_j(\theta)$, the set $(4T) \cap B(0,1)$ is contained in a plank $T_{\theta}$ of dimensions
\[ \sim 2^{j\left( 2 \widetilde{\delta} -1 \right)} \times 2^{j\left( \widetilde{\delta} - 1/2\right) } \times 1, \]
with short direction parallel to $\gamma(\theta)$, medium direction parallel to $\gamma'(\theta)$, and long direction parallel to $(\gamma \times \gamma')(\theta)$, where the implicit constant depends only on $\gamma$; this follows from the second order Taylor approximation for $\gamma$. Therefore, the intervals in the set
\[ \left\{ \rho_{\theta}(T_{\theta}) : T \in \mathbb{T}_{\tau,b}, \quad \tau \in \Lambda_j,  \quad \left\lvert \theta_{\tau} - \theta \right\rvert \leq 2^{j\left(\widetilde{\delta}-1/2\right)} \right\}, \]
all have length $\sim 2^{j\left( 2 \widetilde{\delta} -1 \right)}$, and form a cover of $\rho_{\theta}(B_j(\theta) \cap B(0,1))$. By the Vitali covering lemma, there is a disjoint subcollection 
\[ \{ \rho_{\theta}(T_{\theta}) : T \in \mathcal{B}_{\theta} \}, \]
indexed by some set $\mathcal{B}_{\theta}$, such that 
\[ \{ 3\rho_{\theta}(T_{\theta}) : T \in \mathcal{B}_{\theta} \}, \]
is a cover of $\rho_{\theta}(B_j(\theta) \cap B(0,1))$. The set $\mathcal{B}_{\theta}$ has cardinality $\left\lvert \mathcal{B}_{\theta}\right\rvert \leq \mu(\mathbb{R}^3)2^{j(1-\epsilon_0)}$; by disjointness and the definition of the ``bad'' planks (see \eqref{badplankdefn}). Since the conclusion of the lemma holds for $\epsilon_0$, it follows that for each $j \geq J$,
\[ \int \mu(B_j(\theta) ) \, d\lambda(\theta) \leq \int \left( \rho_{\theta\#} \mu \right) \left( \bigcup_{T_{\theta} \in \mathcal{B}_{\theta} } 3 \rho_{\theta}(T_{\theta} ) \right) \, d\lambda(\theta) \lesssim 2^{j \left(-\delta_{\epsilon_0}/2 + 10 \widetilde{\delta} \right)} \mu(\mathbb{R}^3). \]
The set $B_j(\theta)$ is piecewise constant in $\theta$ over a partition of $I$ into Borel sets, so it may be assumed that the integrands above are Borel measurable. Since $\widetilde{\delta} \ll \delta_{\epsilon_0}$, the inequality above yields
\[ \eqref{mainpart} \lesssim \mu(\mathbb{R}^3)2^{-(J \delta_{\epsilon_0})/100} \sim \mu(\mathbb{R}^3)R^{-(\epsilon \delta_{\epsilon_0})/10^5}. \]

It remains to bound the contribution from $\mu_g$.  By the assumptions in the lemma, 
\[  \sup_{\theta \in I} \mathcal{H}^1\left(  \bigcup_{D \in \mathbb{D}_{\theta}} D \right) \lesssim  R^{-\epsilon} \mu(\mathbb{R}^3). \]  
Since $\varepsilon \ll \epsilon - (2\epsilon_0)/3$, it suffices to prove that
\[ \int \left\lVert \rho_{\theta \#} \mu_g \right\rVert_{L^2(\mathcal{H}^1) }^2 \, d\lambda(\theta) \lesssim \max\left\{R^{2\epsilon_0/3 + 1000 \varepsilon}, R^{\epsilon/2} \right\} \mu(\mathbb{R}^3), \]
By Plancherel's theorem in 1 dimension,
\[ \int  \left\lVert \rho_{\theta \#} \mu_g \right\rVert_{L^2(\mathcal{H}^1) }^2\, d\lambda(\theta) = \int \int_{\mathbb{R}} \left\lvert \widehat{ \mu_g} ( t \gamma(\theta) ) \right\rvert^2 \, dt \, d\lambda(\theta). \]
To formally prove this identity, one approach is to rotate $\gamma(\theta)$ to $(1,0,0)$ (using that $\mathcal{H}^1$ is a rotation invariant measure on $\mathbb{R}^3$), and apply Plancherel's theorem on $\mathbb{R}$. By symmetry, by summing a geometric series, and by the rapid decay of $\widehat{\mu}$ outside $B(0,R)$ (see \eqref{schwartzdecay}), it will suffice to bound
\[ \int \int_{2^{j-1}}^{2^j}\left\lvert \widehat{\mu_g}\left( t \gamma(\theta) \right) \right\rvert^2 \, dt \, d\lambda(\theta),   \]
for any $j \geq 2J$ with $2^j \leq R^{1+\widetilde{\delta}}$; the contribution from the small frequencies can be bounded trivially by the definition of $J$. For each $\tau \in \Lambda$, define the set of ``good'' planks corresponding to $\tau$ by
\[ \mathbb{T}_{\tau,g} = \mathbb{T}_{\tau} \setminus \mathbb{T}_{\tau, b}. \]
 Then 
\begin{multline*} \int \int_{2^{j-1}}^{2^j}\left\lvert \widehat{\mu_g}\left( t \gamma(\theta) \right) \right\rvert^2 \, dt \, d\lambda(\theta) \\
\lesssim \int  \int_{2^{j-1}}^{2^j}\left\lvert \sum_{\tau \in \bigcup_{\left\lvert j' -j\right\rvert \leq 2} \Lambda_{j'}} \sum_{T \in \mathbb{T}_{\tau,g} } \widehat{M_T\mu}\left( t \gamma(\theta) \right) \right\rvert^2 \, dt \, d\lambda(\theta) +  2^{-j}\mu(\mathbb{R}^3). \end{multline*}
Since the $\tau$'s are finitely overlapping, 
\begin{multline} \label{intermezzo} \int \int_{2^{j-1}}^{2^j}\left\lvert \sum_{\tau \in \bigcup_{\left\lvert j' -j\right\rvert \leq 2} \Lambda_{j'}} \sum_{T \in \mathbb{T}_{\tau,g} } \widehat{M_T\mu}\left( t \gamma(\theta) \right) \right\rvert^2 \, dt \, d\lambda(\theta) \lesssim 2^{-j}\mu(\mathbb{R}^3)  \\
+ \sum_{\tau \in \bigcup_{\left\lvert j' -j\right\rvert \leq 2} \Lambda_{j'} } \int \int_{2^{j-1}}^{2^j}\left\lvert \sum_{T \in \mathbb{T}_{\tau, g} } \widehat{M_T\mu}\left( t \gamma(\theta) \right) \right\rvert^2 \, dt \, d\lambda(\theta). \end{multline}
The uncertainty principle implies that each of the integrals in the right-hand side of \eqref{intermezzo} is bounded by $2^{-j\beta}$ times the integral of the same function over $\mathbb{R}^3$. More precisely, for each $\tau \in\bigcup_{\left\lvert j' -j\right\rvert \leq 2} \Lambda_{j'}$, the contribution from the planks in the above sum with $T \cap B\left( 0,2^{10j \widetilde{\delta}}\right) = \emptyset$ is negligible since $\mu$ is supported in $B(0,1)$. The remaining sum 
\[ g_{\tau} = \sum_{T \in \mathbb{T}_{\tau, g} : T \cap B\left( 0,2^{10j \widetilde{\delta}}\right) \neq \emptyset   } M_T \mu, \]
is equal to $g_{\tau} \varphi$ where $\varphi$ is a smooth bump function on $B\left( 0,  2^{ 1 + 10j \widetilde{\delta}}\right)$ obtained by rescaling a bump function on the unit ball, and therefore by the Cauchy-Schwarz inequality,
\begin{equation} \label{essentiallyconstant} \left\lvert \widehat{g_{\tau} } \right\rvert^2 \lesssim \left\lvert \widehat{g_{\tau} } \right\rvert^2 \ast \zeta, \end{equation}
where 
\[ \zeta(\xi) = \frac{2^{30j\delta}}{1+  \lvert 2^{10j \delta} \xi \rvert^{100}}, \qquad \xi \in \mathbb{R}^3. \]
The function in the right-hand side of \eqref{essentiallyconstant} is essentially constant on balls of radius $2^{-10j \widetilde{\delta}}$ (as it inherits this property from $\zeta$), so by discretising the integral in \eqref{intermezzo} into balls of radius $2^{-10j \widetilde{\delta}}$ and using the condition $c_{\beta}(\lambda) \leq 1$, 
\begin{multline*} \int \int_{2^{j-1}}^{2^j}\left\lvert \sum_{T \in \mathbb{T}_{\tau, g} } \widehat{M_T\mu}\left( t \gamma(\theta) \right) \right\rvert^2 \, dt \, d\lambda(\theta) \lesssim \\
2^{j\left(100\widetilde{\delta}-\beta\right)} \int_{\mathbb{R}^3} \left\lvert \sum_{T \in \mathbb{T}_{\tau, g} : T \cap B\left( 0,2^{10j \widetilde{\delta}}\right) \neq \emptyset   } \widehat{M_T\mu} \right\rvert^2 \, d\xi +  2^{-j} \mu(\mathbb{R}^3), \end{multline*} 
for each $\tau \in\bigcup_{\left\lvert j' -j\right\rvert \leq 2} \Lambda_{j'}$. Let 
\[ \mathbb{T}_{j, g} = \bigcup_{\tau \in \bigcup_{\left\lvert j' -j\right\rvert \leq 2} \Lambda_{j'}} \left\{ T \in \mathbb{T}_{\tau, g}: T \cap B\left( 0,2^{10j \widetilde{\delta}}\right) \neq \emptyset   \right\}. \]
By Plancherel's theorem in $\mathbb{R}^3$ and the finite overlapping property of the $T$'s, it suffices to prove that for any $j \geq 2J$,
\begin{equation} \label{enough} \sum_{T \in \mathbb{T}_{j, g}} \int_{\mathbb{R}^3} \left\lvert M_T \mu\right\rvert^2 \, dx \lesssim 2^{j\left( \beta + \frac{2 \epsilon_0}{3} + 100 \varepsilon \right) } \mu(\mathbb{R}^3). \end{equation}
From the definition $M_T \mu = \eta_T \left( \mu \ast \widecheck{\psi_{\tau(T)} } \right)$ and by Fubini, the left-hand side of the above is equal to
\[  \int  \sum_{T \in \mathbb{T}_{j, g}} \left[\eta_T M_T \mu \right] \ast \widecheck{\psi_{\tau(T)}} \, d\mu. \]
If $f_T := \left[\eta_T M_T \mu \right] \ast \widecheck{\psi_{\tau(T)}}$, then by the Cauchy-Schwarz inequality with respect to the measure $\mu$, the square of the above is bounded by
\[  \int \left\lvert \sum_{T \in \mathbb{T}_{j, g}} f_T \right\rvert^2 \, d\mu \cdot \mu(\mathbb{R}^3). \]
By the uncertainty principle,
\[  \int  \left\lvert \sum_{T \in \mathbb{T}_{j, g}} f_T \right\rvert^2 \, d\mu \lesssim \int  \left\lvert \sum_{T \in \mathbb{T}_{j, g}} f_T \right\rvert^2 \, d\mu_j, \]
where $\mu_j = \mu \ast \phi_j$ and $\phi_j(x) = \frac{2^{3j}}{1+\left\lvert 2^j x\right\rvert^{N}}$ , where $N \sim 1000/ \widetilde{\delta}^2$. By dyadic pigeonholing, there exists a collection $\mathbb{W}$ of planks $T \in \mathbb{T}_{j,g}$ with $\left\lVert f_T\right\rVert_p$ constant over $T \in \mathbb{W}$ up to a factor of 2, and a union $Y$ of disjoint $2^{-j}$-balls $Q$ such that each $Q$ intersects $\sim M$ planks $2T \in \mathbb{W}$ for some dyadic number $M$, and such that
\[  \int \left\lvert \sum_{T \in \mathbb{T}_{j, g}} f_T \right\rvert^2 \, d\mu_j \lesssim j^{10} \int_{Y} \left\lvert \sum_{T \in \mathbb{W}} f_T \right\rvert^2 \, d\mu_j + 2^{-j} \mu(\mathbb{R}^3)^2, \]
Let $p=6$. By Hölder's inequality with respect to the Lebesgue measure,
\begin{equation} \label{pause} \int_{Y} \left\lvert \sum_{T \in \mathbb{W}} f_T \right\rvert^2 \, d\mu_j \leq \left\lVert  \sum_{T \in \mathbb{W}} f_T \right\rVert_{L^p(Y)}^2 \left( \int_Y \mu_j^{\frac{p}{p-2} }\right)^{\frac{p-2}{p} }, \end{equation}
By the dimension condition $c_{\alpha}(\mu) \leq 1$ on $\mu$, the definition of $Y$, and the definition of the ``good'' planks,
\begin{align} \label{muestimate} \int_Y \mu_j^{\frac{p}{p-2} } &\lesssim 2^{\frac{ 2j(3-\alpha)}{p-2} } \int_Y \mu_j \\
\notag &\leq 2^{\frac{ 2j(3-\alpha)}{p-2} } \sum_{Q \subseteq Y} \int_Q \mu_j \\
\notag &\lesssim  \left( \frac{1}{M} \right) \cdot 2^{\frac{ 2j(3-\alpha)}{p-2} } \sum_{Q \subseteq Y} \sum_{\substack{T \in \mathbb{W} \\ 2T \cap Q \neq \emptyset } } \int_{Q \cap 3T} \mu_j \\
\notag &\lesssim  \left( \frac{1}{M} \right) \cdot 2^{\frac{ 2j(3-\alpha)}{p-2} }  \sum_{T \in \mathbb{W}} \int_{3T} \mu_j \\
\notag &\lesssim  \left( \frac{1}{M} \right) \cdot 2^{\frac{ 2j(3-\alpha)}{p-2} }  \sum_{T \in \mathbb{W}} \mu(4T) + 2^{-100j} \\
\notag &\lesssim 2^{\frac{2j(3-\alpha)}{p-2} + j\left(\epsilon_0 - 1\right)} \left( \frac{ \left\lvert \mathbb{W} \right\rvert }{M} \right). \end{align}
This bounds the second factor in \eqref{pause}, so it remains to bound the first factor. 

By rescaling by $2^j$, applying the refined decoupling inequality (see Theorem~\ref{refineddecouplingtheorem} of the appendix), and then rescaling back,
\[ \left\lVert \sum_{T \in \mathbb{W}} f_T \right\rVert_{L^p(Y)} \lesssim 2^{j \varepsilon} \left( \frac{M}{\left\lvert \mathbb{W} \right\rvert} \right)^{\frac{1}{2} - \frac{1}{p} } \left( \sum_{T \in \mathbb{W} } \left\lVert f_T\right\rVert_p^2 \right)^{1/2}. \] 
Recall that $f_T = \left[\eta_T M_T \mu \right] \ast \widecheck{\psi_{\tau(T)}}$. By applying the Hausdorff-Young inequality, then Hölder's inequality, and then Plancherel's theorem,
\[ \left\lVert f_T\right\rVert_p \lesssim  \left\lVert M_T \mu\right\rVert_2 2^{\frac{3j}{2} \left( \frac{1}{2} - \frac{1}{p}\right)}. \]
Hence 
\begin{equation} \label{refineddecoupling} \left\lVert \sum_{T \in \mathbb{W}} f_T \right\rVert_{L^p(Y)} \lesssim 2^{\frac{3j}{2} \left( \frac{1}{2} - \frac{1}{p} + \varepsilon \right)} \left( \frac{M}{\left\lvert \mathbb{W} \right\rvert} \right)^{\frac{1}{2} - \frac{1}{p} } \left( \sum_{T \in \mathbb{W} } \left\lVert M_T \mu\right\rVert_2^2 \right)^{1/2}. \end{equation}
Putting \eqref{refineddecoupling} and \eqref{muestimate} into \eqref{pause} gives
\[ \sum_{T \in \mathbb{T}_{j, g}} \int_{\mathbb{R}^3} \left\lvert M_T \mu\right\rvert^2 \lesssim 2^{j \left[ \frac{5-2\alpha}{2p} + \frac{1}{4} +\frac{\epsilon_0(p-2)}{2p} + \frac{3\varepsilon}{2}\right]} \left( \sum_{T \in \mathbb{T}_{j, g}} \int_{\mathbb{R}^3} \left\lvert M_T \mu\right\rvert^2 \right)^{1/2} \mu(\mathbb{R}^3)^{1/2}. \]
By cancelling the common factor, this gives
\[ \sum_{T \in \mathbb{T}_{j, g}} \int_{\mathbb{R}^3} \left\lvert M_T \mu\right\rvert^2 \lesssim  2^{j \left[ \frac{5-2\alpha}{p} + \frac{1}{2} +\frac{\epsilon_0(p-2)}{p} + 3\varepsilon  \right]}\mu(\mathbb{R}^3). \]
Since $p=6$ and $\alpha = 4- 3\beta$, this simplifies to
\[ \sum_{T \in \mathbb{T}_{j, g}} \int_{\mathbb{R}^3} \left\lvert M_T \mu\right\rvert^2 \lesssim  2^{j\left( \beta+ \frac{2\epsilon_0}{3} + 3\varepsilon \right)}\mu(\mathbb{R}^3), \]
which verifies \eqref{enough}, and as explained above, this proves the lemma.  
\end{proof} 

The proof of Theorem~\ref{projmeasure} will be similar to the proof of the lemma.
\begin{proof}[Proof of Theorem~\ref{projmeasure}] It will first be shown that the set
\[ \left\{ \theta \in I: \rho_{\theta \#} \mu \not\ll \mathcal{H}^1 \right\} \]
is Borel measurable. It may be assumed that $\mu$ is compactly supported, since if $\mu_k$ is the restriction of $\mu$ to $B(0,k)$, then 
\[ \left\{ \theta \in I: \rho_{\theta \#} \mu \not\ll \mathcal{H}^1 \right\} = \bigcup_{k=1}^{\infty} \left\{ \theta \in I: \rho_{\theta \#} \mu_k \not\ll \mathcal{H}^1 \right\}. \]
For any positive integer $n$, the function
\[ (\theta,t) \mapsto \left(\rho_{\theta \#} \mu\right)(B(t\gamma(\theta), 1/n ) ), \]
is lower semicontinuous on $I \times \mathbb{R}$, and is therefore Borel measurable from $I \times \mathbb{R}$ to $[0, +\infty]$ (here $B(x,r)$ denotes the \emph{open} ball or interval of radius $r$ around $x$). It follows that the function 
\[ (\theta,t) \mapsto \limsup_{n \to \infty}  n\left(\rho_{\theta \#} \mu\right)(B(t\gamma(\theta), 1/n ) ) \]
is Borel measurable from $I \times \mathbb{R}$ to $[0, +\infty]$. By Tonelli's theorem (see e.g.~\cite[Theorem~1.7.15]{tao}), the function 
\[ \theta \mapsto \int_{\mathbb{R}} \limsup_{n \to \infty}  n\left(\rho_{\theta \#} \mu\right)(B(t\gamma(\theta), 1/n ) ) \, dt \]
is Borel measurable from $I$ to $[0, +\infty]$. It follows that the set
\[ \left\{ \theta \in I : \int_{\mathbb{R}} \limsup_{n \to \infty}  n\left(\rho_{\theta \#} \mu\right)(B(t\gamma(\theta), 1/n ) ) \, dt  < \mu(\mathbb{R}^3) \right\}, \]
is a Borel measurable subset of $I$, and this set is equal to $\left\{ \theta \in I: \rho_{\theta \#} \mu \not\ll \mathcal{H}^1 \right\}$ by the Lebesgue differentiation theorem (see e.g.~\cite[Theorem~3.22]{folland}).

As in the proof of the lemma, it may be assumed that $\gamma$ is localised to a small interval on which Lemma~\ref{IBP} holds.  It may also be assumed that $\alpha \leq 3$, $c_{\alpha}(\mu) \leq 1$ and (by countable stability of the Hausdorff dimension) that $\mu$ has support in the unit ball. Let $\beta$ be such that $0 \leq \beta < (4-\alpha)/3$. Let $\lambda$ be a Borel measure supported on $I$ with $c_{\beta}(\lambda) \leq 1$. Let $\epsilon>0$ be such that $\epsilon \ll \frac{4-\alpha}{3} -\beta$. Choose $\widetilde{\delta}>0$ such that $\widetilde{\delta} \ll \min\left\{ \epsilon, \delta_{\epsilon} \right\}$, where $\delta_{\epsilon}$ is an exponent corresponding to $\epsilon$ from Lemma~\ref{energylemma}. Using Definition~\ref{decompdefn}, define $\mu_b$ by 
\begin{equation} \label{mubaddefn2}  \mu_b = \sum_{j \geq 1} \sum_{\tau \in \Lambda_j} \sum_{T \in \mathbb{T}_{\tau,b} } M_T \mu, \end{equation}
where, for each $j \geq 1$ and $\tau \in \Lambda_j$, the set of ``bad'' planks corresponding to $\tau$ is defined by 
\[ \mathbb{T}_{\tau, b} = \left\{ T \in \mathbb{T}_{\tau} : \mu(4T) \geq 2^{j(\epsilon -1)} \right\}. \]
Since the frequencies are no longer localised, the sum in \eqref{mubaddefn2} only converges a~priori in the space of tempered distributions, but the individual functions $M_T \mu$ are smooth  and compactly supported. For $\lambda$-a.e.~$\theta \in I$,
\begin{equation} \label{pushbaddefn} \rho_{\theta \#}  \mu_b  := \sum_{j \geq 1} \sum_{\tau \in \Lambda_j} \sum_{T \in \mathbb{T}_{\tau,b} } \rho_{\theta \#} M_T \mu, \end{equation}
where, for $\lambda$-a.e.~$\theta \in I$, the series will be shown to be absolutely convergent in $L^1(\mathcal{H}^1)$. Since $L^1$ is always a Banach space, any absolutely convergent series of $L^1$ functions is convergent in $L^1$, so the $\lambda$-a.e.~absolute convergence of \eqref{pushbaddefn} in $L^1(\mathcal{H}^1)$ will imply that $\rho_{\theta\#} \mu_b \in L^1(\mathcal{H}^1)$ for $\lambda$-a.e.~$\theta \in I$, and will imply that the series is well-defined as an $L^1(\mathcal{H}^1)$ limit. Define 
\[ \rho_{\theta\#} \mu_g = \rho_{\theta\#} \mu - \rho_{\theta \#} \mu_b, \]
for each $\lambda \in I$ such that the sum defining $\rho_{\theta\#}\mu_b$ converges in $L^1(\mathcal{H}^1)$ (which will include $\lambda$-a.e.~$\theta \in I$). It will be shown that 
\[ \rho_{\theta \#} \mu_g \in L^2(\mathcal{H}^1), \]
for $\lambda$-a.e.~$\theta \in I$. Together with $\rho_{\theta\#} \mu_b \in L^1(\mathcal{H}^1)$, this will imply that $\rho_{\theta \#}\mu \in L^1(\mathcal{H}^1)$ (or equivalently $\rho_{\theta \#}\mu \ll \mathcal{H}^1$) for $\lambda$-a.e.~$\theta \in I$.

It will first be shown that
\begin{equation} \label{pause333} \int \sum_{j \geq 1} \sum_{\tau \in \Lambda_j} \sum_{T \in \mathbb{T}_{\tau,b} } \left\lVert \rho_{\theta \#}  M_T \mu \right\rVert_{L^1(\mathcal{H}^1 ) } \, d\lambda(\theta) < \infty. \end{equation}
The proof of this is similar to the proof of Lemma~\ref{energylemma}, but some of the details will be included for readability. The left-hand side of \eqref{pause333} can be written as
\begin{align}  \notag &\sum_{j \geq 1}   \int \sum_{\tau \in \Lambda_j} \sum_{T \in \mathbb{T}_{\tau,b} }\left\lVert \rho_{\theta \#}  M_T \mu\right\rVert_{L^1(\mathcal{H}^1) } \, d\lambda(\theta)  \\
\label{mainpart2} &\quad = \sum_{j \geq 1}  \int  \sum_{\substack{\tau \in \Lambda_j: \\ \left\lvert \theta_{\tau} - \theta \right\rvert \leq 2^{j\left(\widetilde{\delta}-1/2\right)}}}  \sum_{T \in \mathbb{T}_{\tau,b} }\left\lVert \rho_{\theta \#} M_T \mu\right\rVert_{L^1(\mathcal{H}^1) } \, d\lambda(\theta)\\
\label{negligible2} &\qquad + \sum_{j \geq 1}   \int  \sum_{\substack{\tau \in \Lambda_j: \\ \left\lvert \theta_{\tau} - \theta \right\rvert > 2^{j\left(\widetilde{\delta}-1/2\right)}}} \sum_{T \in \mathbb{T}_{\tau,b} }\left\lVert \rho_{\theta \#} M_T \mu \right\rVert_{L^1(\mathcal{H}^1) } \, d\lambda(\theta). \end{align}
By Lemma~\ref{IBP},
\[ \eqref{negligible2}  \lesssim \mu(\mathbb{R}^3).\] 
By Lemma~\ref{MTf},
\begin{align*} \eqref{mainpart2} &\leq  \sum_{j \geq 1}  \int \sum_{\substack{\tau \in \Lambda_j: \\ \left\lvert \theta_{\tau} - \theta \right\rvert \leq 2^{j\left(\widetilde{\delta}-1/2\right)}}}  \sum_{T \in \mathbb{T}_{\tau,b} }\left\lVert  M_T \mu \right\rVert_{L^1(\mathbb{R}^3) } \, d\lambda(\theta) \\
&\lesssim \mu(\mathbb{R}^3) + \sum_{j \geq 1} 2^{3j \widetilde{\delta}} \int  \sum_{\substack{\tau \in \Lambda_j: \\ \left\lvert \theta_{\tau} - \theta \right\rvert \leq 2^{j\left(\widetilde{\delta}-1/2\right)}}}  \sum_{T \in \mathbb{T}_{\tau,b} }\mu(2T) \, d\lambda(\theta). \end{align*}
As in the proof of Lemma~\ref{energylemma}, the non-tail term satisfies 
\[ \sum_{j \geq 1} 2^{3j \widetilde{\delta}} \int  \sum_{\substack{\tau \in \Lambda_j: \\ \left\lvert \theta_{\tau} - \theta \right\rvert \leq 2^{j\left(\widetilde{\delta}-1/2\right)}}}  \sum_{T \in \mathbb{T}_{\tau,b} }\mu(2T) \, d\lambda(\theta) \lesssim  \sum_{j \geq 1} 2^{10j \widetilde{\delta}}\int \mu(B_j(\theta) ) \, d\lambda(\theta),  \]
where, for each $\theta \in I$ and each $j$, 
\[ B_j(\theta) =  \bigcup_{\substack{\tau \in \Lambda_j: \\ \left\lvert \theta_{\tau} - \theta \right\rvert \leq 2^{j\left(\widetilde{\delta}-1/2\right)}}}  \bigcup_{T \in \mathbb{T}_{\tau,b} } 2T. \]
For each $T$ in the union defining $B_j(\theta)$, the set $(4T) \cap B(0,1)$ is contained in a plank $T_{\theta}$ of dimensions
\[ \sim 2^{j\left( 2 \widetilde{\delta} -1 \right)} \times 2^{j\left( \widetilde{\delta} - 1/2\right) } \times 1, \]
with short direction parallel to $\gamma(\theta)$, medium direction parallel to $\gamma'(\theta)$, and long direction parallel to $(\gamma \times \gamma')(\theta)$. The intervals in the set
\[ \left\{ \rho_{\theta}(T_{\theta}) : T \in \mathbb{T}_{\tau,b}, \quad \tau \in \Lambda_j,  \quad \left\lvert \theta_{\tau} - \theta \right\rvert \leq 2^{j\left(\widetilde{\delta}-1/2\right)} \right\}, \]
all have length $\sim 2^{j\left( 2 \widetilde{\delta} -1 \right)}$, and cover $\rho_{\theta}(B_j(\theta) \cap B(0,1))$. By the Vitali covering lemma, there is a disjoint subcollection 
\[ \{ \rho_{\theta}(T_{\theta}) : T \in \mathcal{B}_{\theta} \}, \]
indexed by some set $\mathcal{B}_{\theta}$, such that 
\[ \{ 3\rho_{\theta}(T_{\theta}) : T \in \mathcal{B}_{\theta} \}, \]
is a cover of $\rho_{\theta}(B_j(\theta) \cap B(0,1))$. The set $\mathcal{B}_{\theta}$ has cardinality $\left\lvert \mathcal{B}_{\theta}\right\rvert \leq \mu(\mathbb{R}^3)2^{j(1-\epsilon)}$; by disjointness and the definition of the ``bad'' planks. By Lemma~\ref{energylemma}, for each $j \geq 1$,
\[ \int \mu(B_j(\theta) ) \, d\lambda(\theta) \leq \int \left( \rho_{\theta\#} \mu \right) \left( \bigcup_{T_{\theta} \in \mathcal{B}_{\theta} } 3 \rho_{\theta}(T_{\theta} ) \right) \, d\lambda(\theta) \lesssim 2^{j \left(-\delta_{\epsilon}/2 + 10 \widetilde{\delta} \right)} \mu(\mathbb{R}^3). \] Since $\widetilde{\delta} \ll \delta_{\epsilon}$, summing the above inequality over $j$ gives
\[ \eqref{mainpart2} \lesssim \mu(\mathbb{R}^3). \]

It remains to show that $\rho_{\theta\#}  \mu_g \in L^2(\mathcal{H}^1)$ for $\lambda$-a.e.~$ \theta \in I$. To prove this, by Plancherel's theorem in 1 dimension it suffices to show that
\[ \int  \int_{\mathbb{R}} \left\lvert \widehat{\mu_g} \left( t \gamma(\theta) \right) \right\rvert^2 \, dt \,  d\lambda(\theta) < \infty. \]
By symmetry and by summing a geometric series, it is enough to show that for any $j \geq 1$,  
\[ \int  \int_{2^{j-1}}^{2^j}\left\lvert \widehat{\mu_g}\left( t \gamma(\theta) \right) \right\rvert^2 \, dt \,  d\lambda(\theta) \lesssim 2^{-j\epsilon}.   \]
By similar reasoning to the proof of Lemma~\ref{energylemma}, it suffices to show that
\[ \sum_{T \in \mathbb{T}_{j, g}} \int_{\mathbb{R}^3} \left\lvert M_T \mu\right\rvert^2 \lesssim 2^{j(\beta-10\epsilon) }, \]
where
\[ \mathbb{T}_{j, g} = \bigcup_{\tau \in \bigcup_{\left\lvert j' -j\right\rvert \leq 2} \Lambda_{j'}} \left\{ T \in \mathbb{T}_{\tau, g}: T \cap B\left( 0,2^{10j \widetilde{\delta}}\right) \neq \emptyset   \right\}. \]
By a similar argument to the proof of Lemma~\ref{energylemma},
\[ \sum_{T \in \mathbb{T}_{j, g}} \int_{\mathbb{R}^3} \left\lvert M_T \mu\right\rvert^2 \lesssim  2^{j \left[ \frac{5-2\alpha}{p} + \frac{1}{2} +100\epsilon \right]}. \]
Since $\alpha >4-3\beta$, $p=6$ and $\epsilon \ll \frac{4-\alpha}{3} -\beta$, this implies that
\[ \sum_{T \in \mathbb{T}_{j, g}} \int_{\mathbb{R}^3} \left\lvert M_T \mu\right\rvert^2 \lesssim  2^{j\left(\beta-10\epsilon\right)}, \]
which finishes the proof of the theorem. \end{proof}

\begin{proof}[Proof of Theorem~\ref{theoremgeneral}] By the density theorem for Hausdorff measures (\cite[Theorem~6.2]{mattila}),
\[ \limsup_{r \to 0^+} \frac{ \mathcal{H}^s(A  \cap B(x,r) )}{r^s} \leq 2^s \qquad \text{$\mathcal{H}^s$-a.e.~$x \in A$.} \] 
It follows that if, for each positive integer $n$,
\[ A_n := \left\{x \in A :  \sup_{ 0 < r < 1/n} \frac{ \mathcal{H}^s(A  \cap B(x,r) )}{r^s}  < 2^{s+1}\right\},\]  
and $\mu_n$  is the Borel measure defined by 
\[ \mu_n(F) = \mathcal{H}^s\left(F \cap A_n \setminus \bigcup_{k=1}^{n-1} A_k\right),  \]
for any Borel set $F$, then
\begin{equation} \label{muformula} \mu = \sum_{n=1}^{\infty} \mu_n, \end{equation}
and for any $n \geq 1$
\[ c_s(\mu_n) \leq \max\left\{ 2^{2s+1}, (2n)^s \mathcal{H}^s(A) \right\}. \]
By Theorem~\ref{projmeasure}, for any $n \geq 1$,
\[ \dim\left\{ \theta \in I : \rho_{\theta \#} \mu_n \not\ll \mathcal{H}^1 \right\} \leq \frac{4-s}{3}, \]
By \eqref{muformula}, 
\[ \left\{ \theta \in I : \rho_{\theta \#} \mu \not\ll \mathcal{H}^1 \right\} = \bigcup_{n=1}^{\infty} \left\{ \theta \in I : \rho_{\theta \#} \mu_n \not\ll \mathcal{H}^1 \right\}. \]
By countable stability of the Hausdorff dimension, it follows that
\[ \dim\left\{ \theta \in I : \rho_{\theta \#} \mu \not\ll \mathcal{H}^1 \right\} \leq \frac{4-s}{3}. \]
This proves the first half of the theorem. For the second half, let $B \subseteq A$ be an $\mathcal{H}^s$-measurable set with $\mathcal{H}^s(B) >0$. Let $\nu$ be the Borel measure 
\[ \nu(F) = \mathcal{H}^s(F \cap B) \qquad (= \mu(F \cap B)), \]
for any Borel set $F$. It will be shown that
\begin{equation} \label{bycontradiction} \left\{ \theta \in I: \rho_{\theta \#} \nu \ll \mathcal{H}^1 \right\}  \subseteq \left\{ \theta \in I: \mathcal{H}^1( \rho_{\theta} (B) ) > 0 \right\}. \end{equation}
Let $\theta \in I$ be such that $\rho_{\theta \#} \nu \ll \mathcal{H}^1$. Suppose for a contradiction that $\mathcal{H}^1(\rho_{\theta}(B)) = 0$. Let $\delta>0$ be such that 
\[ (\rho_{\theta\#} \nu)(F) < \mathcal{H}^s(B), \]
for any Borel set $F$ with $\mathcal{H}^1(F) < \delta$. Since $\mathcal{H}^1(\rho_{\theta}(B)) = 0$, there exists a Borel set $F$ containing $\rho_{\theta}(B)$ with $\mathcal{H}^1(F) < \delta$. Hence 
\[ \mathcal{H}^s(B) = \mathcal{H}^s( \rho_{\theta}^{-1}(F) \cap B ) = (\rho_{\theta\#} \nu)(F) < \mathcal{H}^s(B), \]
and this contradiction proves \eqref{bycontradiction}. Thus
\begin{align*} \left\{ \theta \in I: \mathcal{H}^1( \rho_{\theta} (B) ) = 0 \right\} &\subseteq \left\{ \theta \in I: \rho_{\theta \#} \nu \not\ll \mathcal{H}^1 \right\} \\
&\subseteq \left\{ \theta \in I: \rho_{\theta \#} \mu \not\ll \mathcal{H}^1 \right\}. \qedhere \end{align*} 
  \end{proof}

\appendix
\section{Refined decoupling}

The following inequality is Theorem 9 from \cite{GGGHMW}; it is a refined version of the decoupling theorem for generalised cones. 
\begin{theorem}[{\cite[Theorem~9]{GGGHMW}}] \label{refineddecouplingtheorem} Let $I$ be a compact interval, and let $\gamma :I \to S^2$ be a $C^2$ unit speed curve with $\det(\gamma, \gamma', \gamma'' )$ nonvanishing on $I$. Then if $c>0$ is sufficiently small (depending only on $\gamma$), then for any $\epsilon >0$, there exists $\delta_0>0$ such that the following holds for all $0 < \delta < \delta_0$, and any $R \geq 1$. Let $\Theta_R$ be a maximal $c R^{-1/2}$-separated subset of $I$, and for each $\theta \in \Theta_R$, let
\begin{multline*} \tau(\theta) := \\
\left\{ \lambda_1 \gamma(\theta) + \lambda_2 \gamma'(\theta) + \lambda_3 (\gamma \times \gamma')(\theta) : 1/2 \leq \lambda_1 \leq 1, \lvert \lambda_2 \rvert \leq R^{-1/2}, \lvert \lambda_3 \rvert \leq R^{-1} \right\}. \end{multline*}
For each $\tau = \tau(\theta)$, let $\mathbb{T}_{\tau}$ be a  $\sim 1$-overlapping cover of $\mathbb{R}^3$ by translates of 
\[ \left\{ \lambda_1 \gamma(\theta) + \lambda_2 \gamma'(\theta) + \lambda_3 (\gamma \times \gamma')(\theta) : \lvert \lambda_1 \rvert \leq R^{\delta}, \lvert \lambda_2 \rvert \leq R^{1/2 + \delta}, \lvert \lambda_3 \rvert \leq R^{1+\delta} \right\}. \]
If $2 \leq p \leq 6$, and 
\[ \mathbb{W} \subseteq \bigcup_{\theta \in \Theta_R} \mathbb{T}_{\tau(\theta)}, \]
and 
\[ \sum_{T \in \mathbb{W} } f_T \]
is such that $\lVert f_T \rVert_p$ is constant over $T \in \mathbb{W}$ up to a factor of 2, with $\supp \widehat{f_T} \subseteq \tau(T)$ and 
\[ \lVert f_T\rVert_{L^{\infty}(B(0,R) \setminus T) } \leq A R^{-10000} \lVert f_T \rVert_p, \]
and $Y$ is a disjoint union of balls in $B_3(0,R)$ of radius 1, such that each ball $Q \subseteq Y$ intersects at most $M$ planks $2T$ with $T \in \mathbb{W}$, then 
\[ \left\lVert \sum_{T \in \mathbb{W} } f_T \right\rVert_{L^p(Y) } \leq C_{A, \gamma,c, \epsilon, \delta} R^{\epsilon} \left( \frac{M}{\left\lvert \mathbb{W} \right\rvert } \right)^{\frac{1}{2} - \frac{1}{p} } \left( \sum_{T \in \mathbb{W} } \left\lVert f_T \right\rVert_p^2 \right)^{1/2}. \] \end{theorem}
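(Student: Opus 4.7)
The plan is to derive the refined inequality from the standard $L^p$ decoupling theorem for generalised cones (valid for $2 \le p \le 6$), via the template introduced for Kakeya-type problems and carried out in the cone setting in \cite{GGGHMW}. The three ingredients are: (a) \emph{local} $L^p$ decoupling on a unit ball; (b) the sparsity hypothesis that only $M$ planks pass through any such ball; (c) the constancy of $\lVert f_T \rVert_p$, which converts an $\ell^p$ sum into an $\ell^2$ sum.

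Group the planks by their caps: set $F := \sum_{T \in \mathbb{W}} f_T$ and, for each $\theta \in \Theta_R$, $F_{\tau(\theta)} := \sum_{T \in \mathbb{W},\, \tau(T) = \tau(\theta)} f_T$, so that $F = \sum_\theta F_{\tau(\theta)}$ and each $F_{\tau(\theta)}$ has Fourier support in $\tau(\theta)$. Fix a unit ball $Q \subseteq Y$. Multiplying by a bump adapted to $Q$ and invoking the global decoupling theorem for the generalised cone gives a local estimate of the form
\[ \lVert F \rVert_{L^p(Q)}^p \lesssim R^{\epsilon p/2} N_Q^{p/2 - 1} \sum_\theta \lVert F_{\tau(\theta)} \rVert_{L^p(w_Q)}^p, \]
where $w_Q$ is a rapidly decaying weight adapted to $Q$, and $N_Q$ is the number of caps $\tau$ containing some $T \in \mathbb{W}$ with $2T \cap Q \neq \emptyset$. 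The hypothesis on $M$ forces $N_Q \le M$. Moreover, the decay assumption $\lVert f_T \rVert_{L^\infty(B(0,R) \setminus T)} \le A R^{-10000} \lVert f_T \rVert_p$, together with the approximate spatial orthogonality of the wave packets $\{ f_T : \tau(T) = \tau(\theta) \}$ (which are essentially supported on the disjoint translates of a common plank tiling $\mathbb{R}^3$), yields, for each $\theta$,
\[ \lVert F_{\tau(\theta)} \rVert_{L^p(w_Q)}^p \lesssim \sum_{\substack{T \in \mathbb{W} \\ \tau(T) = \tau(\theta),\ 2T \cap Q \neq \emptyset}} \lVert f_T \rVert_{L^p(w_Q)}^p \ + \ (\text{negligible tail}). \]

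Summing over $Q \subseteq Y$ and using the disjointness of the $Q$'s to collapse $\sum_Q \lVert f_T \rVert_{L^p(w_Q)}^p \lesssim \lVert f_T \rVert_p^p$ produces
\[ \lVert F \rVert_{L^p(Y)}^p \lesssim R^{\epsilon p/2} M^{p/2 - 1} \sum_{T \in \mathbb{W}} \lVert f_T \rVert_p^p. \]
Constancy of $\lVert f_T \rVert_p$ (up to a factor of $2$) gives $\sum_T \lVert f_T \rVert_p^p \sim \lvert \mathbb{W} \rvert^{1 - p/2} \bigl( \sum_T \lVert f_T \rVert_p^2 \bigr)^{p/2}$. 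Substituting this and taking $p$-th roots produces exactly the factor $(M / \lvert \mathbb{W} \rvert)^{1/2 - 1/p}$ in the conclusion.

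The main obstacle I anticipate is step (a): localising the global cone decoupling inequality to a unit ball in a way that is compatible with both the $N_Q^{1/2 - 1/p}$ gain of step (b) and the wave-packet orthogonality used to pass from $F_{\tau}$ to individual $f_T$'s. This requires choosing weights that have rapid decay off $Q$ while remaining ``flat enough'' to absorb bump functions adapted to $Q$; it is a standard but technical maneuver. The underlying ingredient --- sharp $L^p$ decoupling for $C^2$ cones with nonvanishing torsion in the range $2 \le p \le 6$ --- is a Bourgain--Demeter-type result that I would invoke as a black box, since the statement in question is quoted verbatim from \cite{GGGHMW}.
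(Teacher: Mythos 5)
This theorem is not proved in the paper at all: it is quoted verbatim from \cite{GGGHMW} (as the appendix says explicitly), so there is no in-paper proof to compare against. Evaluating your argument on its own merits, the central step fails.

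The ``local estimate'' in step (a) does not exist. You multiply $F$ by a bump $\phi_Q$ adapted to a \emph{unit} ball $Q$ and claim that the global cone decoupling theorem then yields
\[
\lVert F \rVert_{L^p(Q)}^p \lesssim R^{\epsilon p/2}\, N_Q^{p/2-1} \sum_{\theta} \lVert F_{\tau(\theta)}\rVert_{L^p(w_Q)}^p.
\]
But $\widehat{F\phi_Q} = \widehat{F} * \widehat{\phi_Q}$, and $\widehat{\phi_Q}$ has essential support at scale $\sim 1$, which is exactly the \emph{largest} dimension of the caps $\tau$ (they have dimensions $1 \times R^{-1/2} \times R^{-1}$). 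Convolving with $\widehat{\phi_Q}$ therefore smears the Fourier supports of the $F_{\tau}$ so that they all overlap in a single $O(1)$-ball on the cone, and no nontrivial separation of caps survives. Concretely, the decoupling theorem for the cone at the scale of these caps is an estimate on spatial balls of radius $\gtrsim R$; on a ball of radius $1$ the uncertainty principle gives you at best the trivial Cauchy--Schwarz bound $\lVert F\rVert_{L^p(Q)} \lesssim N_Q^{1/2}\bigl(\sum_\tau \lVert F_\tau\rVert_{L^p(w_Q)}^2\bigr)^{1/2}$, whose $N_Q^{1/2}$ factor (rather than $N_Q^{1/2-1/p}$) gives no gain whatsoever when you sum over $Q \subseteq Y$ and apply the constancy of $\lVert f_T\rVert_p$. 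With the trivial factor the argument bottoms out at the bound $\lVert F\rVert_{L^p(Y)} \lesssim M^{1/2-1/p}\bigl(\sum_T \lVert f_T\rVert_p^2\bigr)^{1/2}$, which is worse than the ordinary decoupling inequality whenever $M$ is large, and it does not recover $(M/\lvert\mathbb{W}\rvert)^{1/2-1/p}$.

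The missing idea is that the sparsity hypothesis (at most $M$ planks through each unit ball of $Y$) must be combined with decoupling \emph{at the intermediate scales}, not at scale $1$. The known proofs of refined decoupling --- for the paraboloid in Guth--Iosevich--Ou--Wang and for this cone in \cite{GGGHMW} --- proceed by a multi-scale induction (broad/narrow \`a la Bourgain--Guth, or an iteration of decoupling from scale $R$ down through $R^{1/2}, R^{1/4}, \dots$), using (generalised-cone) rescaling to pass between scales. At each stage one applies the genuine decoupling theorem on a ball of radius comparable to the scale at hand, and the plank-counting hypothesis is fed into the argument via a pigeonholing over the incidence structure of $\{2T\}$ with the intermediate-scale balls. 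Your steps (b) and (c) --- the use of $N_Q \le M$, the spatial orthogonality of the $f_T$ within a cap, and the $\ell^p \to \ell^2$ conversion from constancy of $\lVert f_T\rVert_p$ --- are all correct bookkeeping and reappear in the true proof, but they cannot substitute for the induction on scales, which is where all the analytic content lives.
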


\end{document}